\DeclareMathAlphabet{\mathpzc}{OT1}{pzc}{m}{it}
\newtheorem{te}{Theorem}[section]
\newtheorem{defin}[te]{Definition}
\newtheorem{os}[te]{Remark}
\newtheorem{prop}[te]{Proposition}
\numberwithin{equation}{section}
\def \l { \left( }
\def \r {\right) }
\begin{document}

\title[]{  
para-Markov chains\\ and related non-local equations
}
\keywords{Fractional operators, Non-Markovian dynamics, Semi-Markov chains, Schur systems of lifetimes, Functions of matrices.
\\
* Dipartimento di Scienze Statistiche - Sapienza Università di Roma
\\
** Dipartimento di Matematica "Giuseppe Peano" - Università degli studi di Torino
\\
Corresponding author: Enrico Scalas
}
\date{\today }
\subjclass{60G55, 60G09, 60G99}

\thanks{The authors acknowledge financial support under the National Recovery and Resilience Plan (NRRP), Mission 4, Component 2, Investment 1.1, Call for tender No.
104 published on 2.2.2022 by the Italian Ministry of University and Research (MUR), funded by the
European Union - NextGenerationEU- Project Title “Non-Markovian Dynamics and Non-local Equations”
- 202277N5H9 - CUP: D53D23005670006 - Grant Assignment Decree No. 973 adopted on June 30, 2023,
by the Italian Ministry of Ministry of University and Research (MUR) }
\thanks{
The author Bruno Toaldo would like to thank the Isaac Newton Institute for Mathematical Sciences, Cambridge, for support and hospitality during the programme Stochastic Systems for Anomalous Diffusion, where work on this paper was undertaken. This work was supported by EPSRC grant EP/Z000580/1.}

 \author[]{Lorenzo Facciaroni*}
 \author[]{Costantino Ricciuti*}
 \author[]{Enrico Scalas*}
 \author[]{Bruno Toaldo**}
 
\begin{abstract}
There is a well established theory that links
   semi-Markov chains having Mittag-Leffler waiting times to time-fractional equations.
   We here go beyond the semi-Markov setting, by defining some non-Markovian chains whose waiting times, although marginally Mittag-Leffler,  are assumed to be stochastically dependent. This creates a long memory tail in the evolution, unlike what happens for semi-Markov processes.
As a special case of our chains, we study a particular counting process which extends the well-known fractional Poisson process, the last one having independent, Mittag-Leffler waiting times.

\end{abstract}

\maketitle

\tableofcontents
{}
{}

\section{Introduction}

Continuous-time semi-Markov chains are obtained from Markov chains by relaxing the assumption of exponential waiting times.  Several papers were devoted to chains with Mittag-Leffler distributed waiting times, among which the well-known fractional Poisson process is included (the reader can consult e.g. \cite{ascione2021, orsingherbeghin2009, degregorio2021, georgiu2015, kolokoltsov2009, laskin2003, mainardi2004, scalas2004, meerschaert2011,  toaldo2019, polito2010, ricciuti2023semi}). A Mittag-Leffler distribution is characterized by a parameter $\nu \in (0,1]$, such that if $\nu=1$  one re-obtains the exponential distribution and a Markov chain, while, for  $\nu \in (0,1)$,  the distribution has a density $\psi$  with asymptotic  power-law decay, $\psi (t)\sim t^{-\nu -1}$. In the latter case, the waiting times have infinite mean and variance, which is useful in models of anomalous diffusion (see e.g. \cite{metzler2000}) as well as in financial applications (see e.g. \cite{scalas2021}). 

In the Markovian case, the transition matrix $P(t)$ solves the Kolmogorov equation
$$\frac{d}{dt}P(t) = G P(t) $$
where $G$ is the infinitesimal generator of the chain. For semi-Markov chains with Mittag-Leffler waiting times, this equation is replaced by  
$$\frac{d^\nu}{dt^\nu}P(t) = G P(t) $$
where $d^\nu/dt^\nu$ denotes the Caputo fractional derivative, see also \cite{garra2015, kataria2019, orsingher2018semi, ricciuti2017semi} for the case where $\nu$ depends on the current state of the process, which is applied to models of anomalous diffusion in heterogenerous media.

Note that in the Markovian case, corresponding to $\nu=1$, one has a ``local''
 equation:  the time derivative - a local operator -  is  consistent with the lack-of-memory property typical of Markov chains. Indeed, the connection between Markov processes and first order differential equation is well-established.

On the contrary, if $\nu \in (0,1)$ one has a ``non-local'' equation:  the integral operator of Volterra type $d^\nu/dt^\nu $ contains a memory kernel.

A limitation of these models is as follows.
Semi-Markov processes lose memory of the past at renewal times only, i.e. when the chain jumps. Therefore, the future evolution of a semi-Markov process is only influenced by the recent past, and not by the whole past history. In this sense, the time-fractional derivative only contains information on the recent past through the age (i.e. the time elapsed from the previous renewal event).

In general, the study of non-Markovian processes is difficult. Therefore, any progress in this direction is of potential interest to a wide community of mathematicians and applied scientists. One goal is then to use the apparatus of fractional operators and time-fractional equations to treat the long-memory tail of some non-Markovian processes (which are not semi-Markov).

Our contribution to this goal is to define a class of processes that we call \textit{(fractional) para-Markov chains}. These processes have a property in common with semi-Markov ones: the marginal distribution of each waiting time is Mittag-Leffler. However, all the waiting times are stochastically dependent, hence the process  keeps memory of the whole past history. We then extend the mathematical techniques typically used for semi-Markov chains, including the use of fractional operators, to such a class of non-Markovian processes. Eventually, we obtain a governing equation of the form
$$\frac{d^\nu}{dt^\nu}P(t) = -(-G)^\nu P(t). $$
What makes these processes analytically tractable is that, in distribution, they are proved to be a suitable mixture of Markov processes, hence the choice of the name {\em para-Markov}.

Our most general results concern the case of finite state space. However,  we also deal with an important case with infinite countable state space, namely an extension of the fractional Poisson process. We refer to it as the \textit{exchangeble fractional Poisson process}, for reasons that will become clear later. This is a counting process with unit jumps whose waiting times, although marginally having a Mittag-Leffler distribution, present an appropriate stochastic dependence. The latter is given by a particular Schur-type distribution (in the sense of \cite{barlow1992, caramellino1994dependence, caramellino1996wbf}). Therefore, unlike the fractional Poisson process, our counting process is not a renewal process.

The structure of the paper is the following: section 2 is devoted to some preliminaries on Markov and semi-Markov chains; section 3 deals with the exchangeable fractional Poisson process; in section 4 we introduce the general theory of para-Markov chains and study the finite-state case.

\section{Preliminaries}\label{preliminaries}

Let us consider a sequence of non-negative random variables $\theta = \{\theta_n\}_{n=1}^{\infty}$, which we interpret as the sequence of waiting times, and the stochastic process $T := \{T_n,\ n\in\mathbb{N}\}$ such that
\begin{align*}
    T_n := \sum_{k = 1}^n \theta_k,
\end{align*}
with the convention $T_0 := 0$.
Let $\mathcal{S}$ be a countable state space and let $Y = \{Y_n,\ n\in\mathbb{N}\}$ be a discrete-time stochastic process which takes value in $\mathcal{S}$. We say that the process $X = \{ X_t,\ t\geq0 \}$, defined by
\begin{align*}
    X_t = Y_n \qquad \qquad t\in[T_n, T_{n+1}),\ n\in\mathbb{N},
\end{align*}
is a continuous-time chain.

We will consider three types of continuous-time chains, say Markov, semi-Markov, and para-Markov, the last one being introduced in this paper. In all three cases, the embedded chain $Y$ is a discrete-time Markov chain, and thus what distinguishes one from the others is the joint distribution of the waiting times.
Let $H = [h_{ik}]$ be the transition matrix of $Y$, such that
\begin{align*}
    h_{ik} := \mathbb{P}[ Y_{n+1}=k | Y_n=i ]  \qquad i,k \in \mathcal{S}
\end{align*}
under the convention $h_{ii}=0$.

Consider $\lambda : \mathcal{S} \to (0,\infty)$. The process $X$ is a \textit{continuous-time Markov chain} if (consult   \cite{norris1998}, page 94) the waiting times are such that
\begin{align}
    \mathbb{P}[\theta_1 > t_1, \ldots, \theta_n > t_n | Y_0 = y_0,\ldots, Y_{n-1} = y_{n-1}] &= e^{-\lambda(y_0)t_1}\cdots e^{-\lambda(y_{n-1}) t_{n}},  \label{Preliminaries: conditional dependence structure of continuous-time Markov chains}
\end{align}
i.e. the $\theta_i$s are conditionally independent, each of them having exponential distribution
\begin{align}
    \mathbb{P}[\theta_i > t | Y_{i-1} = x] = e^{-\lambda(x) t}.\label{Preliminaries: exponential distribution for waiting times}
\end{align}
According to the above definition, the Markov chain is homogeneous in time. 

A key object of continuous-time Markov chains is the generator of the process, denoted as $G = [g_{ij}],\ i,j\in S$. If $i\neq j$, then $g_{ij}$ represents the instantaneous rate at which the process moves from state $i$ to state $j$, i.e. $g_{ij}=\lambda (i) h_{ij}$.
Moreover, from  \eqref{Preliminaries: exponential distribution for waiting times}, the time spent in state $i$ before transitioning to another state, is exponentially distributed with rate $-g_{ii} =\lambda(i)$. In compact form, we can write the generator as 
\begin{align}g_{ij}= \lambda (i) (h_{ij}-\delta _{ij}). \label{generatore markoviano component-wise}\end{align}

The probability that the process is in state $j$ at time $t$, given that it started in $i$ at time 0, is denoted by $P_{ij}(t)$. We shall use the matrix form with $P(t) = [P_{ij}(t)]$. The family of operators $\{P(t),\ t\geq 0\}$ satisfies the semi-group property $P(t+s) = P(t)P(s)$ and it is the solution of the system of Kolmogorov backward $(a)$ and forward $(b)$ equation
\begin{align} \label{kolmogorov equations markov}
    (a)\begin{cases}
        \frac{d}{dt}P(t) = G P(t)
        \\ P(0) = I
    \end{cases}
    \qquad \qquad (b)\begin{cases}
        \frac{d}{dt}P(t) = P(t) G
        \\ P(0) = I
    \end{cases}.
\end{align}

\textit{Semi-Markov chains} are obtained from Markov ones by relaxing the assumption of exponential waiting times. Then equation \eqref{Preliminaries: exponential distribution for waiting times} is replaced by
\begin{align}
    \mathbb{P}[\theta_n > t | Y_{n-1} = x] = S_x(t), \label{Preliminaries: generic distribution for waiting times}
\end{align}
where $S_x(\cdot)$ is a generic survival function; this implies that the lack-of-memory property is satisfied only at time instants when jumps occur.

Let the distribution \eqref{Preliminaries: generic distribution for waiting times} be absolutely continuous with density $f_x(\cdot)$.
Moreover, let $p_{ij}(t)$ be the probability that the process $X$ moves from state $i$ to state $j$ in a time interval $[0,t]$, under the condition that $0$ is a renewal time, and let $P=[p_{ij}(t)]$ be the transition matrix. The family $\{P(t), t\geq 0\}$ cannot satisfy the semigroup property, i.e. $P(t+s)\neq P(t)P(s)$ unless in the Markovian case where $S_x$ is exponential.
By standard conditioning arguments, one can see that
\begin{align}
    p_{ij}(t) = \sum_{k\in S} h_{ik} \int_0^t f_i(\tau)\, p_{kj}(t-\tau) d\tau + S_i(t)\, \delta_{ij}. \label{Preliminaries: Markov Renewal equation}
\end{align}
Equation \eqref{Preliminaries: Markov Renewal equation} is called the semi-Markov renewal equation.

We are interested in semi-Markov chains whose waiting times follow the so-called  Mittag-Leffler distribution, which is defined by a particular choice of the survival function  \ref{Preliminaries: generic distribution for waiting times}.
\begin{defin}\label{Definition of Mittag-Leffler ranom variable}
    A non-negative random variable $J$ is said to follow   a Mittag-Leffler distribution with parameters $\nu\in(0,1]$ and $\lambda \in (0, \infty)$ if 
    \begin{align*}
        \mathbb{P}(J>t)= \mathcal{M}_{\nu}(-\lambda t^{\nu}), \;\; t \geq 0
    \end{align*}
   where $\mathcal{M}_{\nu} (\cdot)$ is the one-parameter Mittag-Leffler function, defined by
   \begin{align} \mathcal{M}_\nu (z) = \sum _{k=0}^\infty \frac{z^k}{\Gamma (1+\nu k)} \qquad z\in \mathbb{C}.\label{funzione Mittag Leffler}\end{align}
\end{defin}

Definition (\ref{Definition of Mittag-Leffler ranom variable}) gives an absolutely continuous distribution. So, consider semi-Markov chains whose waiting times are such that
\begin{align}
        \mathbb{P}[\theta_k > t | Y_{k-1} = x] = \mathcal{M}_{\nu}(-\lambda(x)t^{\nu}), \qquad x\in\mathcal{S},\label{Preliminaries: ML distribution for waiting times}
\end{align}
i.e., conditionally to $Y_{k-1}=x$, the variable $\theta _k$ has Mittag-Leffler distribution with parameters $\nu$ and $\lambda (x)$.
For $\nu=1$, \eqref{Preliminaries: ML distribution for waiting times} reduces to an exponential distribution and hence the process becomes a continuous-time Markov chain, with generator $G$ defined by $g_{ij}= \lambda (i) (h_{ij}-\delta _{ij})$. For $\nu \in (0,1)$ the process $X$ is semi-Markov.

Moreover, it is known that such a process is governed by the following backward $(a)$ and forward $(b)$ fractional equations (for a proof sketch, which is based on the renewal equation \eqref{Preliminaries: Markov Renewal equation}, see Proposition 2.1 in \cite{ricciuti2017semi} and references therein):    

\begin{align}
    (a)\begin{cases}
        \frac{d^\nu}{dt^\nu}P(t) = G P(t)
        \\ P(0) = I
    \end{cases}
    \qquad \qquad (b)\begin{cases}
        \frac{d^\nu}{dt^\nu}P(t) = P(t) G
        \\ P(0) = I
    \end{cases}. \label{Kolmogorov backward and forward}
\end{align}
Note that the state space is not required to be finite. As a particular case, the fractional Poisson process is obtained by setting $\lambda (x) =\lambda$ for any $x\in \mathcal{S}$ and $h_{i,j}=1$ if  $j = i+1$ and $h_{ij} = 0$ otherwise, whence $G$ is such that $g_{ii}= -\lambda$     and $g_{i,i+1}=\lambda$. Therefore, considering the forward system \textit{(b)} and setting $p_{0j}(t):=p_j(t)$, we obtain the governing equation often reported in the literature:
\begin{align}\frac{d^\nu}{dt^\nu}p_j(t)= -\lambda p_j(t)+\lambda p_{j-1}(t) \qquad p_j(0)= \delta_{j,0} \label{equazione fractional poisson}.\end{align}
There are many extensions of the fractional Poisson process, such as the time inhomogeneous extensions defined in \cite{beghin2019, leonenko2017}, as well as the counting processes studied in \cite{dicrescenzo2016, gupta2023, maheshwari2019}.

\section{The Exchangeable fractional Poisson process}

The well-known fractional Poisson process has independent, Mittag-Leffler waiting times between arrivals. 
The goal here is to build a counting process which, in analogy to the fractional Poisson process, increases by 1 unit when an event occurs and each waiting time has marginal Mittag-Leffler distribution. However,  we relax the hypothesis of independence between waiting times, as shown in the following definition.

\begin{defin} \label{Definition of exchangeable fractional Poisson process}

Let $\{J_k\}_{k=1}^\infty$ be a sequence of non-negative random variables, such that, for all $n\in \mathbb{N}\setminus \{0\}$ we have
\begin{align}
        \mathbb{P}[J_1 > t_1,\ \ldots,\ J_n > t_n] = \mathcal{M}_{\nu}\left(-\lambda^{\nu} \left(\sum_{k=1}^n t_k\right)^{\nu}\right) \qquad \qquad \nu\in(0,1],\ \ \lambda \in (0,\infty), \label{exchangeable fractional Poisson: joint survival function for (J_1, ..., J_n)}
    \end{align}
 where $t_k\geq0,\ k \in\{1,\ldots,n\}$. Moreover let $T_n:= \sum_{k=1}^nJ_k$, with the convention $T_0 := 0$.
Then the process $N = \{N_t,\ t\geq0\}$ defined by
    \begin{align*}
        N_t = n \qquad \qquad t\in[T_n, T_{n+1})
    \end{align*}
    is said to be \textbf{exchangeable fractional Poisson process} with parameters $\lambda$ and $\nu$.
\end{defin}
 We note that each $J_k$ follows a marginal Mittag-Leffler distribution with parameters $\lambda ^\nu$ and $\nu$, in the sense of Definition \ref{Definition of Mittag-Leffler ranom variable}; this can be obtained from formula \eqref{exchangeable fractional Poisson: joint survival function for (J_1, ..., J_n)}   with  $t_j = 0$ for each $j\neq k$. Another important feature is that the above sequence of waiting times is an infinite \textit{Schur-constant} sequence. We recall that a sequence $\{X_k\}_{k=1}^\infty$ of non-negative random variables is said to be an infinite \textit{Schur-constant} sequence if, for any $n\in \mathbb{N}\setminus \{0\}$, we have $\mathbb{P}(X_1>t_1, X_2>t_2, \dots, X_n>t_n)= S(t_1+t_2+\dots + t_n)$, for a suitable function $S$ which does not depend on $n$. This is a particular model of \textit{exchangeable} waiting times, in the sense that $S$ depends on the $t_k$ through their sum only, whence the name we have chosen for our counting process; this feature makes the process easily tractable from a statistical point of view and has many applications, see \cite{barlow1992, caramellino1994dependence, caramellino1996wbf}.

We further observe that for $\nu = 1$ we have that $\mathcal{M}_{1}(x)= e^x$ and \eqref{exchangeable fractional Poisson: joint survival function for (J_1, ..., J_n)} has the form
\begin{align}
        \mathbb{P}[J_1 > t_1,\ \ldots,\ J_n > t_n] = e^{- \lambda \sum _{k=1}^n t_k},
        \end{align}
namely the waiting times are i.i.d. exponential and $N_t$ is a Poisson process of parameter $\lambda$.

\begin{os}
From the joint survival function   \eqref{exchangeable fractional Poisson: joint survival function for (J_1, ..., J_n)}  it is possible to obtain the joint distribution function.   Indeed, by observing that
\begin{align*}
    \left\{ J_1 \leq t_1, \ldots, J_n \leq t_n \right\}^{c} = \left\{\{J_1 > t_1\}\cup \cdots \cup \{J_n > t_n\} \right\}, 
\end{align*}
by Poincar\'e Theorem we have 
\begin{align}
    & \mathbb{P}[J_1 \leq t_1, \ldots, J_n \leq t_n] \notag \\ 
    & = 1 - \sum_{i } \mathbb{P}[J_i > t_i] + \sum_{ i< j} \mathbb{P}[J_i > t_i, J_j > t_j] +  \cdots + 
     (-1)^n  \mathbb{P}[J_{1} > t_1, \ldots, J_{n} > t_n] \notag \\
&= 1 - \sum_{i }    \mathcal{M}_{\nu} (-\lambda ^\nu t_i^\nu)               + \sum_{ i< j}   \mathcal{M}_{\nu}\left(-\lambda^{\nu} \left(t_i+t_j\right)^{\nu}\right)   + \dots         +  (-1)^n
     \mathcal{M}_{\nu}\left(-\lambda^{\nu} \left(\sum_{k=1}^n t_k\right)^{\nu}\right). \label{cdf waiting times}
\end{align}
Moreover, we also get the joint density as
    \begin{align}
        f(t_1\ldots, t_n) = (-1)^n \frac{\partial^n}{\partial t_1 \cdots \partial t_n} \mathcal{M}_{\nu}\left(-\lambda^{\nu} \left(\sum_{k=1}^n t_k\right)^{\nu}\right). \label{densita waiting times}
    \end{align}
\end{os}
\begin{os}
By using $f(t_1, \dots, t_n)$, we can obtain the density of  $T_n$, the time of the $n$-th jump:
    \begin{align}
       f_{T_n}(u)&= \frac{d}{du} \mathbb{P}[T_n \leq u ] \notag \\ &= \frac{d}{du} \mathbb{P}\left[ \sum_{k = 1}^n J_k \leq u \right] \notag
        \\ &= \frac{d}{du} \int _{t_1+t_2+ \dots t_n \leq u} f(t_1, \dots, t_n) dt_1\dots dt_n \notag \\
        &= \frac{(-1)^n}{\Gamma (n)} u^{n-1} \mathcal{M}^{(n)}_{\nu} (-\lambda ^\nu u^\nu) \qquad u>0. \label{densita Tn}
    \end{align}
    This can be seen as a generalization of the so-called Erlang distribution that is recovered for $\nu=1$.
\end{os}

Before discussing some properties of the exchangeable fractional Poisson process, we need to recall the following definition (see \cite{Lamperti_James, lamperti1958, orsingher2013}).
\begin{defin} \label{Definition of Lamperti distribution}
    A non-negative random variable $L$ follows a Lamperti distribution of parameter $\nu\in(0,1]$ if its Laplace transform is given by
\begin{align}
    \mathbb{E}\left[e^{-\eta L}\right] = \mathcal{M}_{\nu}\left(-\eta^{\nu}\right), \qquad \eta \geq 0. \label{lamperti laplace}
\end{align}
\end{defin}
\begin{os}\ 
    \begin{enumerate}
        \item For $\nu = 1$ we get $\mathbb{E}\left[e^{-\eta L}\right] = e^{-\eta}$ which implies $L = 1$ almost surely.
        \item For $\nu\in(0,1)$ then $L$ is absolutely continuous with density given by
        \begin{align}
            f(t) = \frac{\sin(\pi \nu)}{\pi}\frac{t^{\nu - 1}}{t^{2\nu} + 2t^{\nu} \cos(\pi\nu) + 1} \ \ \ \ \ \ t > 0. \label{Lamperti density}
        \end{align}
    \end{enumerate}    
\end{os}

The following theorem shows that the exchangeable fractional Poisson process is equal in distribution to a time-changed Poisson process. This time-change consists in a random scaling of time based on a Lamperti variable.

\begin{te} \label{Theorem: time-change for exchangeable fractional Poisson process}
    Let $Q = \{Q_t,\ t\geq0\}$ be a Poisson process with intensity $\lambda$ and $N = \{N_t,t\geq0\}$ be the exchangeable fractional Poisson process, with parameters $\lambda$ and $\nu$.
    Let $L$ have Lamperti distribution with parameter $\nu$. 
    Then we have
    \begin{align*}
        N_t \overset{d}{=} Q_{Lt},\quad \forall t \geq0,
    \end{align*}
    where $\overset{d}{=}$ denotes equality of  finite dimensional distributions.
\end{te}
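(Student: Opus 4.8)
The plan is to prove equality of all finite-dimensional distributions by comparing the joint laws of $(N_{t_1},\dots,N_{t_m})$ and $(Q_{Lt_1},\dots,Q_{Lt_m})$ through the waiting-time structure. The cleanest route is to identify the law of the jump-time vector $(T_1,\dots,T_n)$ on each side, since for a counting process with unit jumps the finite-dimensional distributions are determined by the laws of the jump times: $\{N_{t_1}=k_1,\dots,N_{t_m}=k_m\}$ is an event expressible through inequalities among the $T_j$'s and the $t_i$'s. So it suffices to show $(T_1,\dots,T_n)\overset{d}{=}(L\,\tau_1,\dots,L\,\tau_n)$ for every $n$, where $\tau_1<\tau_2<\dots$ are the jump times of the Poisson process $Q$ with intensity $\lambda$ (equivalently, $(\tau_1,\dots,\tau_n)$ has the Erlang/Gamma joint structure with i.i.d.\ exponential increments of rate $\lambda$), and $L$ is independent of $Q$.

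First I would compute the joint survival function of $(L\tau_1,\dots,L\tau_n)$ and match it with \eqref{exchangeable fractional Poisson: joint survival function for (J_1, ..., J_n)} rewritten for partial sums. Actually it is even more direct to work with the waiting-time increments: on the Poisson side the increments $\tau_k-\tau_{k-1}$ are i.i.d.\ $\mathrm{Exp}(\lambda)$, so conditionally on $L=\ell$ the scaled increments $\ell(\tau_k-\tau_{k-1})$ are i.i.d.\ $\mathrm{Exp}(\lambda/\ell)$, giving
\begin{align*}
\mathbb{P}\bigl[\,\ell(\tau_1-\tau_0)>t_1,\dots,\ell(\tau_n-\tau_{n-1})>t_n \,\big|\, L=\ell\,\bigr] = e^{-(\lambda/\ell)\sum_{k=1}^n t_k}.
\end{align*}
Integrating against the law of $L$ and using the defining property \eqref{lamperti laplace} of the Lamperti distribution with $\eta = \lambda\sum_{k=1}^n t_k$ yields
\begin{align*}
\mathbb{E}\Bigl[e^{-(\lambda/L)\sum_{k=1}^n t_k}\Bigr] \;=\; \mathbb{E}\Bigl[e^{-L'\,\lambda\sum_{k=1}^n t_k}\Bigr] \;=\; \mathcal{M}_\nu\!\Bigl(-\lambda^\nu\Bigl(\sum_{k=1}^n t_k\Bigr)^\nu\Bigr),
\end{align*}
which is exactly the joint survival function of $(J_1,\dots,J_n)$ in Definition \ref{Definition of exchangeable fractional Poisson process}. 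Here I use the standard fact that if $L$ has a Lamperti law then $1/L$ has the same law (the density \eqref{Lamperti density} is invariant under $t\mapsto 1/t$), so $(\lambda/L)$ can be replaced by $\lambda L'$ with $L'\overset{d}{=}L$; alternatively one checks directly that $\mathbb{E}[e^{-\eta/L}]=\mathcal{M}_\nu(-\eta^\nu)$ from \eqref{Lamperti density} by the same contour/Stieltjes-transform computation that underlies \eqref{lamperti laplace}. This establishes that the waiting-time sequence of $Q_{Lt}$ has the same joint law as $\{J_k\}$, hence $(T_1,\dots,T_n)$ agree in law on both sides for every $n$.

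Finally I would pass from equality of the jump-time laws to equality of finite-dimensional distributions of the two counting processes: for any $0\le t_1\le\dots\le t_m$ and integers $k_1\le\dots\le k_m$,
\begin{align*}
\{N_{t_1}=k_1,\dots,N_{t_m}=k_m\} = \bigcap_{i=1}^m \{T_{k_i}\le t_i < T_{k_i+1}\},
\end{align*}
an event depending only on finitely many $T_j$'s, so its probability is the same whether the $T_j$'s are built from $\{J_k\}$ or from $\{L(\tau_k-\tau_{k-1})\}$; the same identity holds for $Q_{Lt}$ with its own jump times. The main obstacle, and the step requiring the most care, is the interchange between conditioning on $L$ and the survival-function identity together with the inversion symmetry $L\overset{d}{=}1/L$: one must justify that the scaling $Q_{Lt}$ has jump times $L\tau_k$ (immediate from $Q_{Lt}=n \iff \tau_n\le Lt<\tau_{n+1} \iff \tau_n/L\le t<\tau_{n+1}/L$, valid a.s.\ since $L>0$), and that the resulting Laplace-transform computation genuinely reproduces $\mathcal{M}_\nu$ rather than merely something with the same one-dimensional marginals — which is why it is essential to carry the full vector $(t_1,\dots,t_n)$ through the argument and note that everything collapses onto the single scalar $\sum_k t_k$, matching the Schur-constant form of \eqref{exchangeable fractional Poisson: joint survival function for (J_1, ..., J_n)}.
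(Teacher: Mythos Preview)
Your overall strategy---show that the waiting-time sequence of $Q_{Lt}$ has the same joint survival function \eqref{exchangeable fractional Poisson: joint survival function for (J_1, ..., J_n)} as $\{J_k\}$, then conclude equality of finite-dimensional distributions---is exactly the paper's (carried out for the general para-Markov chain in Theorem \ref{Proposition:  para-Makov chains as time-change of Markov chains through Lamperti}).

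However, you have inverted the scaling. You claim the jump times of $Q_{Lt}$ are $L\tau_k$, yet in the very parenthetical meant to justify this you correctly derive $Q_{Lt}=n \iff \tau_n/L \le t < \tau_{n+1}/L$, i.e.\ the jump times are $\tau_n/L$ and the waiting times are $\theta_k/L$, not $L\theta_k$. This is why you end up with $\mathbb{E}[e^{-(\lambda/L)\sum t_k}]$ and are forced to invoke the inversion symmetry $L\overset{d}{=}1/L$ to land on $\mathcal{M}_\nu$. With the correct waiting times $\theta_k/L$ the computation is direct: conditionally on $L=\ell$,
\[
\mathbb{P}\bigl[\theta_1/L>t_1,\ldots,\theta_n/L>t_n \,\big|\, L=\ell\bigr]=e^{-\lambda\ell\sum_k t_k},
\]
and integrating against the law of $L$ gives $\mathbb{E}\bigl[e^{-\lambda L\sum_k t_k}\bigr]=\mathcal{M}_\nu\bigl(-\lambda^\nu(\sum_k t_k)^\nu\bigr)$ immediately from \eqref{lamperti laplace}. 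The inversion symmetry (which is indeed true for the Lamperti law) happens to rescue your miscomputation, so your conclusion is correct, but the argument is internally inconsistent and takes an unnecessary detour; once you fix the scaling, it collapses to the paper's proof.
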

In section \ref{para-Markov section} below, we will provide the proof of Theorem \ref{Proposition:  para-Makov chains as time-change of Markov chains through Lamperti}, which includes Theorem \ref{Theorem: time-change for exchangeable fractional Poisson process} as a particular case. For this reason, here we omit the proof of Theorem \ref{Theorem: time-change for exchangeable fractional Poisson process}.    \\

Once again, we stress that for $\nu = 1$ we have $L = 1$ almost surely, that is the time parameter $Lt = t$ is no longer stochastic, obtaining the Poisson case as a special case of exchangeable fractional Poisson process.\\

The equivalence in distribution of Theorem \ref{Theorem: time-change for exchangeable fractional Poisson process} leads to the governing equation of the process. We shall use the notation $\mathbb{P}[N_t = k ] =: p_k(t)$.
\begin{te}
    Let $N$ be the exchangeable fractional Poisson process defined in \ref{Definition of exchangeable fractional Poisson process}. Then 
    \begin{align}
        \frac{d^\nu}{dt^\nu}p_k(t) = -\lambda^{\nu} (I-B)^{\nu} p_k(t) \qquad p_k(0)= \delta _{k,0}\label{Govening equation of the exchangeable fractional Poisson process}
    \end{align}
    where
    $B$ is the shift operator such that $Bp_k(t)=: p_{k-1}(t)$ and $$(I-B)^{\nu} p_k(t) = \sum_{j = 0}^{\infty} \left(\begin{matrix}
        \nu \\ j
    \end{matrix}\right) (-1)^j B^j p_k(t)=  \sum_{j = 0}^{\infty} \left(\begin{matrix}
        \nu \\ j
    \end{matrix}\right) (-1)^j  p_{k-j}(t)  $$ 
\end{te}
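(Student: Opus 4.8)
The plan is to exploit the time-change representation $N_t \overset{d}{=} Q_{Lt}$ of Theorem~\ref{Theorem: time-change for exchangeable fractional Poisson process} at the level of probability generating functions. Introduce $G(u,t) := \mathbb{E}\bigl[u^{N_t}\bigr] = \sum_{k=0}^{\infty} p_k(t)\,u^k$, which for $u\in[0,1)$ converges and, being a power series with nonnegative coefficients of total mass $\le 1$, extends analytically to the open unit disk. First I would compute $G$ explicitly: conditioning on $L$ and using the classical Poisson generating function $\mathbb{E}[u^{Q_\sigma}]=e^{-\lambda\sigma(1-u)}$ gives $G(u,t)=\mathbb{E}\bigl[e^{-\lambda L t(1-u)}\bigr]$, and an application of the Lamperti Laplace transform \eqref{lamperti laplace} with $\eta=\lambda t(1-u)$ yields
\[
G(u,t) = \mathcal{M}_\nu\!\bigl(-\lambda^\nu t^\nu(1-u)^\nu\bigr), \qquad u\in[0,1),\ t\ge 0.
\]

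Next I would differentiate this identity in $t$ in the Caputo sense. Since $t\mapsto\mathcal{M}_\nu(-ct^\nu)$ is the eigenfunction of $d^\nu/dt^\nu$ with eigenvalue $-c$ (equivalently, its $t$-Laplace transform is $s^{\nu-1}/(s^\nu+c)$), for each fixed $u$ one obtains
\[
\frac{d^\nu}{dt^\nu}G(u,t) = -\lambda^\nu(1-u)^\nu\,G(u,t).
\]
It then remains to expand both sides in powers of $u$ and match coefficients. On the right, $(1-u)^\nu=\sum_{j=0}^\infty\binom{\nu}{j}(-1)^j u^j$ is the binomial series, absolutely convergent on $|u|<1$ because $\nu>0$, and multiplication by $u^j$ of a generating function corresponds exactly to the shift $B^j$ (i.e.\ $\sum_k p_{k-j}(t)u^k=u^jG(u,t)$), so the Cauchy product gives $-\lambda^\nu(1-u)^\nu G(u,t)=\sum_{k=0}^\infty\bigl[-\lambda^\nu(I-B)^\nu p_k(t)\bigr]u^k$, where each $(I-B)^\nu p_k(t)=\sum_{j=0}^{k}\binom{\nu}{j}(-1)^j p_{k-j}(t)$ is in fact a finite sum. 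On the left, interchanging $d^\nu/dt^\nu$ with the power series in $u$ produces $\sum_k\bigl[\tfrac{d^\nu}{dt^\nu}p_k(t)\bigr]u^k$; comparing coefficients of $u^k$ gives \eqref{Govening equation of the exchangeable fractional Poisson process}. The initial condition $p_k(0)=\delta_{k,0}$ follows from $N_0=Q_0=0$, equivalently from $G(u,0)=\mathcal{M}_\nu(0)=1$.

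I expect the last interchange to be the main obstacle: $d^\nu/dt^\nu$ is a non-local Volterra-type integral operator in $t$, so pulling it through $\sum_k p_k(t)u^k$ needs a dominated-convergence/Fubini argument, using $|p_k(t)u^k|\le|u|^k$ together with a uniform-in-$k$ bound on the derivatives $p_k'(t)$ that enter the Caputo kernel, valid for $|u|<1$. A clean way to bypass all pointwise subtleties is to carry out the same computation after a Laplace transform in $t$: one finds $\widetilde{G}(u,s)=s^{\nu-1}/\bigl(s^\nu+\lambda^\nu(1-u)^\nu\bigr)$, hence $\bigl(s^\nu+\lambda^\nu(1-u)^\nu\bigr)\widetilde{G}(u,s)=s^{\nu-1}$; recognising $s^\nu\widetilde{G}(u,s)-s^{\nu-1}G(u,0)$ as the Laplace transform of the Caputo derivative, extracting the coefficient of $u^k$ (legitimate since both sides are analytic on $|u|<1$) and inverting the Laplace transform delivers the stated equation together with its initial condition.
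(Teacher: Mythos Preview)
Your proposal is correct and follows essentially the same route as the paper: the paper works with the moment generating function $A(\eta,t)=\mathbb{E}[e^{-\eta N_t}]=\mathcal{M}_\nu(-\lambda^\nu(1-e^{-\eta})^\nu t^\nu)$, applies the eigenfunction property of $\mathcal{M}_\nu(ct^\nu)$ under $d^\nu/dt^\nu$, expands $(1-e^{-\eta})^\nu$ binomially, and then inverts in $\eta$---which is exactly your argument after the substitution $u=e^{-\eta}$. Your explicit discussion of the interchange of $d^\nu/dt^\nu$ with the power series (and the Laplace-in-$t$ workaround) is more careful than the paper, which simply refers to the well-posedness argument of the later Theorem~\ref{Theorem: G irriducibile allora abbiamo 3 tesi. etc etc}.
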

\begin{proof}
 Recalling that the Poisson process $Q_t$ is such that
    \begin{align*}
       \mathbb{E}\left[e^{-\eta Q_t}\right] = e^{-\lambda t ( 1 - e^{-\eta})} \qquad \eta \geq 0,
    \end{align*}
   and using  Theorem \ref{Theorem: time-change for exchangeable fractional Poisson process} we have that  $Q_{Lt}$ has the following moment generating function:
\begin{align}
A(\eta, t)&= \mathbb{E}\left[e^{-\eta Q_{Lt}}\right] \nonumber\\ 
          &= \int _0^\infty  e^{-\lambda t ( 1 - e^{-\eta})l} \mathbb{P}(L\in dl)\nonumber\\
          &= \mathbb{E}\left[ e^{-t\lambda(1-e^{-\eta}) L } \right]\nonumber
        \\ &= \mathcal{M}_{\nu}(-\lambda^{\nu}(1-e^{-\eta})^{\nu}t^{\nu}). \label{moment generating function of Q(Lt)}
    \end{align}
    Given that $t\mapsto \mathcal{M}_\nu(ct^{\nu})$ is an eigenfunction of the Caputo derivative $d^{\nu}/dt^{\nu}$ with eigenvalue $c$, we get
    \begin{align}
        \frac{d^{\nu}}{dt^{\nu}} A(\eta,t) = -\lambda^{\nu}\left(1-e^{-\eta}\right)^{\nu} A(\eta, t) \qquad A(\eta, 0) = 1. \label{Equazione funzione generatrice}
    \end{align}
    Using the equality 
    \begin{align*}
        (1 - e^{-\eta})^{\nu} = \sum_{j = 0}^{\infty} \left(\begin{matrix}
            \nu \\ j
        \end{matrix}\right) (-1)^j e^{-\eta j}
    \end{align*}
    and applying the inverse Laplace transform in $\eta$ on both sides of Equation \eqref{Equazione funzione generatrice} we get the thesis. Indeed, it is possible to prove that $d^\nu p_k(t)/dt^\nu$ is well posed, by using similar arguments as in point $(3)$ of the proof of Theorem \ref{Theorem: G irriducibile allora abbiamo 3 tesi. etc etc} : despite the state space is infinite,  the right hand side of equation (\ref{Govening equation of the exchangeable fractional Poisson process}) actually has a finite number of addends as in Theorem \ref{Theorem: G irriducibile allora abbiamo 3 tesi. etc etc}, because $$(I-B)^\nu p_k(t)=\sum_{j = 0}^{k} \left(\begin{matrix}
        \nu \\ j
    \end{matrix}\right) (-1)^j  p_{k-j}(t).$$

    \ \\
\end{proof}

Before giving a final result, we recall the formula by Faà di Bruno that generalizes the chain rule for the $n$-th derivative of a function composition, see \cite{faa1855}. For $f, u$ satisfying appropriate regularity conditions, we have
\begin{align}
    \l\frac{d}{dx}\r^{n} f(u(x))  = n! \sum_{k = 1}^n \frac{f^{(k)}(u(x))}{k!} \sum_{h_1+\cdots+h_k = n} \prod_{i = 1}^k\frac{u^{(h_i)}(x)}{h_i!} \label{Formula di Faa di Bruno per la derivazione di funzioni composte}
\end{align}
where the second sum is over all $k$-tuples of non-negative integers $(h_1,\ldots, h_k)$ satisfying the constraint $\sum_{i = 1}^kh_i = n.$

We also recall that the $n$-th derivative of the Mittag-Leffler function $\mathcal{M}_\nu(z)$ is 
given by \cite{garrappa2017, prabhakar1971}
\begin{equation}
\label{MLDerivative}
\l\frac{d}{dz}\r^{n} \mathcal{M}_\nu (z) = n! \mathcal{M}^{n+1}_{\nu,n \nu+1} (z),
\end{equation}
where $\mathcal{M}^\gamma_{\alpha,\beta} (z)$ is the so-called Prabhakar function or three-parameter Mittag-Leffler function defined as
\begin{equation}
\label{prabhakarfunction}
\mathcal{M}^{\gamma}_{\alpha,\beta} (z) = \frac{1}{\Gamma(\gamma)} \sum_{k=0}^\infty \frac{\Gamma(k+\gamma) z^k}{k! \Gamma(\alpha k + \beta)}.
\end{equation}

In the following theorem we find the explicit expression of $p_n(t)$ that solves the governing equation \eqref{Govening equation of the exchangeable fractional Poisson process}.
\begin{te}
     Let us consider the exchangeable fractional Poisson process $N$ defined in \ref{Definition of exchangeable fractional Poisson process}. Then the marginal distribution of $N_t$, $p_n(t) = \mathbb{P}(N_t =n)$, is given by
     \begin{align}
        p_n(t) = \begin{cases}
            \mathcal{M}_{\nu}(-\lambda^{\nu}t^{\nu}) & n= 0
            \\ \sum_{k = 1}^n (-1)^{(n+k)} (\lambda t)^{k \nu} \mathcal{M}_{\nu, k\nu + 1}^{k+1}(-\lambda^{\nu}t^{\nu}) c(k,n;\nu) & n\geq 1
        \end{cases} \label{Distribution of N(t) formule di Faa di Bruno}
    \end{align} 
    with 
    \begin{align*}
        c(k,n;\nu) =  \sum_{h_1 + \cdots + h_k = n} \prod_{i = 1}^k\frac{(\nu)_{h_i}}{h_i!} 
    \end{align*}
    where $(\nu)_h := \nu (\nu - 1)\cdots (\nu - h +1)$ and the sum is over all $k$-tuples of non-negative integers $(h_1,\ldots, h_k)$ satisfying the constraint $\sum_{i = 1}^kh_i = n.$
\end{te}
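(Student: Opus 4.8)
The plan is to read off $p_n(t)$ as a Taylor coefficient of the moment generating function computed in the previous theorem. Recall that, by Theorem \ref{Theorem: time-change for exchangeable fractional Poisson process} and the computation \eqref{moment generating function of Q(Lt)}, one has
\begin{align*}
A(\eta,t) \;=\; \mathbb{E}\left[ e^{-\eta N_t} \right] \;=\; \mathcal{M}_{\nu}\!\left(-\lambda^{\nu}(1-e^{-\eta})^{\nu}t^{\nu}\right), \qquad \eta\ge 0.
\end{align*}
Writing $x = e^{-\eta}\in[0,1]$ and using $A(\eta,t)=\sum_{n\ge 0}p_n(t)\,x^n$, we see that $x\mapsto \mathcal{M}_{\nu}\!\left(-\lambda^{\nu}t^{\nu}(1-x)^{\nu}\right)$ is the probability generating function of $N_t$; since this power series converges for $|x|\le 1$ and coincides there with the function on the right (analytic in $|x|<1$, as $\mathcal{M}_\nu$ is entire and the principal branch of $(1-x)^\nu$ is analytic for $|x|<1$), the $p_n(t)$ are determined by
\begin{align*}
p_n(t) \;=\; \frac{1}{n!}\left.\frac{\partial^{n}}{\partial x^{n}}\,\mathcal{M}_{\nu}\!\left(-\lambda^{\nu}t^{\nu}(1-x)^{\nu}\right)\right|_{x=0}.
\end{align*}
For $n=0$ this is $\mathcal{M}_{\nu}(-\lambda^{\nu}t^{\nu})$, which is the first line of \eqref{Distribution of N(t) formule di Faa di Bruno}.

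For $n\ge1$ I would set $w=1-x$, so that $\partial_x^{n}=(-1)^{n}\partial_w^{n}$ and the derivative is to be evaluated at $w=1$, and then apply the Faà di Bruno formula \eqref{Formula di Faa di Bruno per la derivazione di funzioni composte} to the composition $F(u(w))$ with outer function $F(u)=\mathcal{M}_{\nu}(-\lambda^{\nu}t^{\nu}u)$ and inner function $u(w)=w^{\nu}$. Since $u$ is smooth at $w=1$, the formula applies there; moreover $u(1)=1$, $u^{(h)}(1)=\nu(\nu-1)\cdots(\nu-h+1)=(\nu)_h$, and $F^{(k)}(1)=(-\lambda^{\nu}t^{\nu})^{k}\,\mathcal{M}_{\nu}^{(k)}(-\lambda^{\nu}t^{\nu})$. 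Substituting these into \eqref{Formula di Faa di Bruno per la derivazione di funzioni composte} gives
\begin{align*}
\left.\frac{\partial^{n}}{\partial w^{n}}\,\mathcal{M}_{\nu}(-\lambda^{\nu}t^{\nu}w^{\nu})\right|_{w=1}
\;=\; n!\sum_{k=1}^{n}\frac{(-\lambda^{\nu}t^{\nu})^{k}\,\mathcal{M}_{\nu}^{(k)}(-\lambda^{\nu}t^{\nu})}{k!}\,c(k,n;\nu),
\end{align*}
where $c(k,n;\nu)=\sum_{h_1+\cdots+h_k=n}\prod_{i=1}^k (\nu)_{h_i}/h_i!$ is exactly the coefficient in the statement.

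It then remains to insert the derivative formula \eqref{MLDerivative}, $\mathcal{M}_{\nu}^{(k)}(z)=k!\,\mathcal{M}_{\nu,k\nu+1}^{k+1}(z)$, which cancels the $k!$ in the denominator, and to collect signs and powers: multiplying by $(-1)^{n}/n!$ and using $(-\lambda^{\nu}t^{\nu})^{k}=(-1)^{k}(\lambda t)^{k\nu}$ yields precisely $p_n(t)=\sum_{k=1}^{n}(-1)^{n+k}(\lambda t)^{k\nu}\,\mathcal{M}_{\nu,k\nu+1}^{k+1}(-\lambda^{\nu}t^{\nu})\,c(k,n;\nu)$. The only genuinely delicate point is the justification of extracting the $p_n(t)$ as Taylor coefficients at $x=0$ of the (analytically continued) generating function; everything after that is bookkeeping of the Faà di Bruno expansion. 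As a sanity check, when $\nu=1$ one has $(\nu)_h=0$ for $h\ge 2$, so only the term $k=n$ survives with $c(n,n;1)=1$ and $\mathcal{M}_{1,n+1}^{n+1}(z)=e^{z}/n!$, recovering $p_n(t)=(\lambda t)^{n}e^{-\lambda t}/n!$, the Poisson law.
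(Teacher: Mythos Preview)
Your argument is correct. It coincides almost verbatim with the alternative proof the paper records in the Remark immediately following the theorem: pass from the moment generating function \eqref{moment generating function of Q(Lt)} to the probability generating function $\mathbb{E}[u^{N_t}]=\mathcal{M}_\nu(-\lambda^\nu t^\nu(1-u)^\nu)$, then extract $p_n(t)$ as the $n$-th Taylor coefficient at $u=0$ and expand via Fa\`a di Bruno and \eqref{MLDerivative}.

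The paper's \emph{primary} proof reaches the same expansion by a slightly different route: it conditions directly on the Lamperti variable to obtain the integral representation
\[
p_n(t)=\int_0^\infty \frac{(\lambda lt)^n}{n!}e^{-\lambda lt}\,\mathbb{P}(L\in dl)
=\frac{(-1)^n t^n}{n!}\left(\frac{d}{dt}\right)^{\!n}\mathcal{M}_\nu(-\lambda^\nu t^\nu),
\]
and then applies Fa\`a di Bruno in the variable $t$ (with inner function $t\mapsto t^\nu$) rather than in the PGF variable. The two computations are formally parallel --- both end up differentiating a composition $\mathcal{M}_\nu(-\lambda^\nu(\,\cdot\,)^\nu)$ and invoking \eqref{MLDerivative} --- and produce identical combinatorics. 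Your PGF route has the minor advantage that the point of evaluation $w=1$ is manifestly in the domain of smoothness of $w^\nu$, so no care is needed for small $t$; the $t$-derivative route has the advantage that the starting formula \eqref{Distribution of exc Poisson as n-th derivative of ML()} is obtained without any discussion of analyticity or coefficient extraction.
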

\begin{proof}
    Theorem \ref{Theorem: time-change for exchangeable fractional Poisson process} guarantees that $N$ is equal in distribution to a Poisson process with stochastic time parameter $L t$, where $L$ follows the Lamperti distribution of parameter $\nu$. Then, by conditioning, we have
    \begin{align}
        p_n(t) &= \int_0^{\infty} \frac{e^{-\lambda l t}}{n!}\left(\lambda l t\right)^n \mathbb{P}(L\in dl) \nonumber
        \\ &= \frac{t^n}{n!}(-1)^n\left(\frac{d}{dt}\right)^n\int_0^{\infty}e^{-\lambda lt} \mathbb{P}(L\in dl) \nonumber
        \\ &= \frac{t^n}{n!}(-1)^n\left(\frac{d}{dt}\right)^n \mathbb{E}\left[e^{-\lambda t L}\right]  \nonumber
        \\ &= \frac{t^n}{n!}(-1)^n\left(\frac{d}{dt}\right)^n \mathcal{M}_{\nu}\left(-\lambda^{\nu} t^{\nu}\right) \qquad \qquad n\in\mathbb{N},\ t\geq0 
 \label{Distribution of exc Poisson as n-th derivative of ML()} 
    \end{align}
    where we used (\ref{lamperti laplace}). For $n = 0$ we immediately get the thesis. For $n\geq 1$ we can now use formulae \eqref{Formula di Faa di Bruno per la derivazione di funzioni composte} and \eqref{MLDerivative}, to get
    \begin{align*}
         p_n(t) &= \frac{t^n}{n!}(-1)^n n! \sum_{k =1}^n \mathcal{M}_{\nu, k\nu + 1}^{k+1} (-\lambda^{\nu}t^{\nu}) \sum_{h_1 +\cdots + h_k = n} (-1)^k \lambda^{k\nu} \prod_{s = 1}^k\frac{(t^{\nu})^{(h_s)}}{h_s!}
         \\ &= (-1)^n t^n \sum_{k =1}^n  \mathcal{M}_{\nu, k\nu + 1}^{k+1} (-\lambda^{\nu}t^{\nu}) t^{k\nu - n}\sum_{h_1 +\cdots + h_k = n} (-1)^k \lambda^{k\nu} \prod_{s = 1}^k\frac{(\nu)_{h_s}}{h_s!}
         \\ &= \sum_{k = 1}^n (-1)^{(n+k)}  (\lambda t)^{k \nu} \mathcal{M}_{\nu, k\nu + 1}^{k+1}(-\lambda^{\nu}t^{\nu}) c(k,n;\nu).
    \end{align*}
    \\
\end{proof}

\begin{os}
An alternative proof of the previous Theorem, also based on formulae \eqref{Formula di Faa di Bruno per la derivazione di funzioni composte} and \eqref{MLDerivative}, is now proposed. Firstly, starting from Equation \eqref{moment generating function of Q(Lt)}, we have that the probability generating function of $N_t$ is given by
\begin{align}
    \label{PGF}
    \mathbb{E}\left[u^{N_t}\right] &= \mathcal{M}_{\nu}\l -\lambda^{\nu}t^{\nu} (1-u)^{\nu} \r, \qquad |u|\leq 1. 
\end{align}
Hence
\begin{align}
    p_n(t) = \frac{1}{n!}\l \frac{d}{dt} \r^n \mathcal{M}_{\nu}(-\lambda^{\nu}t^{\nu} (1-u)^{\nu}) \bigg|_{u = 0}\label{Distribution of N(t) con p_n(t) ricavata dalla funzione generatrice delle probs}
\end{align}
which, by applying \eqref{Formula di Faa di Bruno per la derivazione di funzioni composte} and \eqref{MLDerivative}, coincides with Equation \eqref{Distribution of N(t) formule di Faa di Bruno}.
\end{os}

\subsection{Simulations}\ \\
This paragraph presents some results concerning the numerical and Monte Carlo simulations of the analytical formulas derived in the previous section. We have used the software R and the libraries MittagLeffleR, kStatistics and stabledist. Specifically, Figure \ref{Comparison between theoretical and simulated values for the exchangeable fractiona Poisson} shows the values of \( p_n(t) \) for \( n \in \{0, \ldots, 9\} \) of the exchangeable fractional Poisson process,  obtained by using the analytical formula \eqref{Distribution of N(t) formule di Faa di Bruno} (red dots) and the corresponding simulated values (green triangles), in the case $\lambda =1$. Notably, these results are essentially coinciding. For the Monte Carlo simulation, we have used \(1,000,000 \) independent random numbers following the Lamperti distribution; to this aim, we used that a Lamperti random variable of parameter $\nu$ is equal in law to the ratio of two independent, positive, $\nu$-stable random variables (see \cite{Lamperti_James} for details). Then we have generated the vector ${\rm N}$, of length $1,000,000$, whose $j$-th component is a realization of a Poisson random variable of parameter $ t {\rm L} [j]$, being ${\rm L}$ the vector which contains the realizations the Lamperti variable. Finally we computed the relative frequency of the events that approximates \( p_n(t) \) for each \( n \). For the computation of the analytical values, we have used the expression of \eqref{Formula di Faa di Bruno per la derivazione di funzioni composte} in terms of exponential Bell polynomials (see \cite{comtet2012advanced} for details). Specifically, used the formula \eqref{Distribution of exc Poisson as n-th derivative of ML()}, firstly computing a function of $n$ and $t$ which gives the value of the coefficient and then multiplying it by the $n$-th time derivative of the Mittag-Leffler with parameter $-\lambda^{\nu}t^{\nu}$. We repeated the simulation for $t = 1,2,5$ and $\nu = 0.1, 0.5, 0.9$.

\begin{figure}
    \centering
    \includegraphics[width = 17cm]{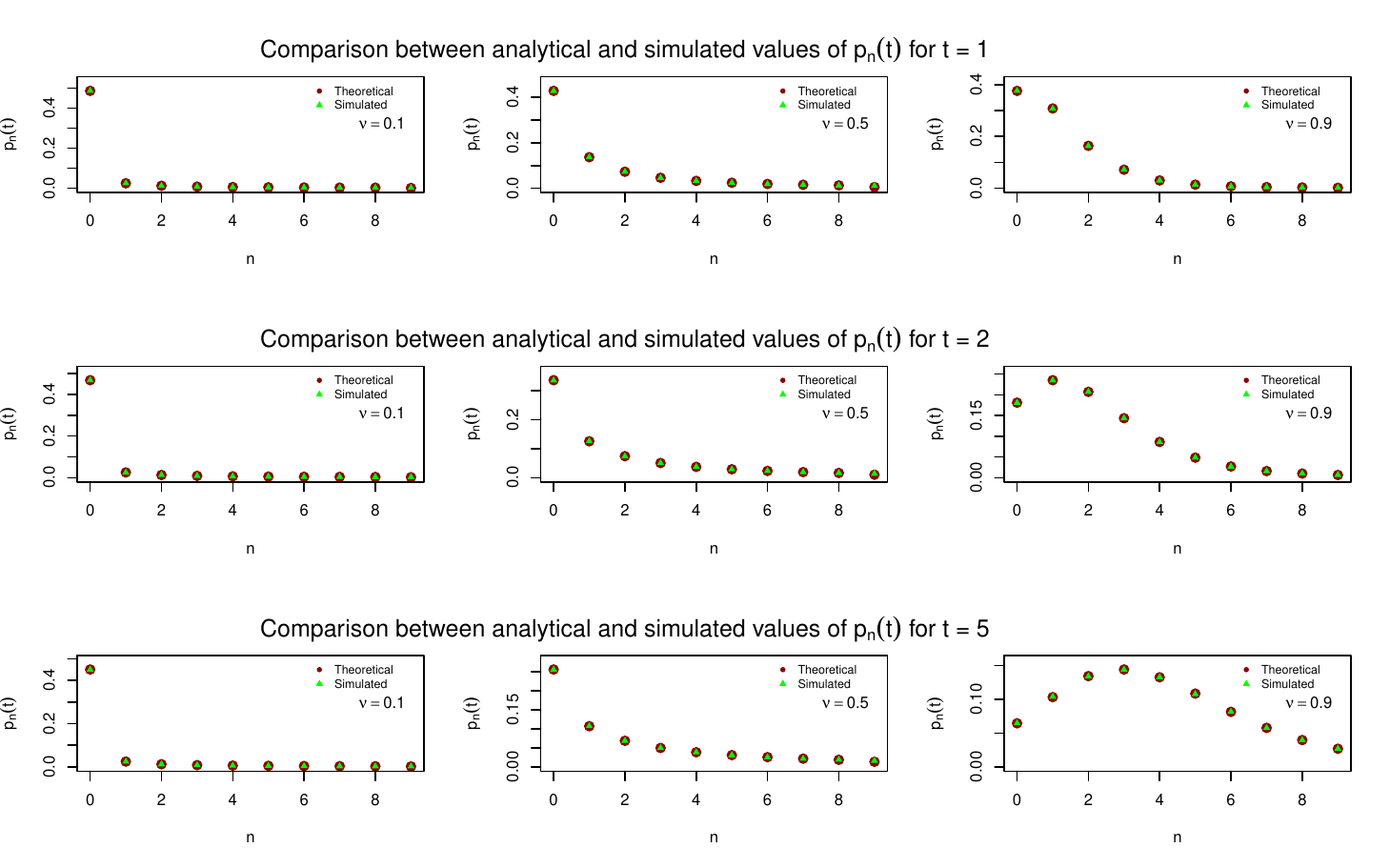}
    \caption{Comparison between analytical values of $p_n(t)$ obtained with formula \eqref{Distribution of N(t) formule di Faa di Bruno} and corresponding simulated values, for $t = 1,2,5$ and $\nu = 0.1, 0.5, 0.9$.}
    \label{Comparison between theoretical and simulated values for the exchangeable fractiona Poisson}
\end{figure}

Figure \ref{Trajectory of exchangeable fractional for different values of nu} shows a trajectory of the exchangeable fractional Poisson process defined in Definition \ref{Definition of exchangeable fractional Poisson process} for different values of the parameter $\nu$, up to $n = 10,000$ events. Here we have generated $n$ values from a Lamperti distribution of parameter $\nu$, for $\nu = 0.5, 0.75, 0.9$, as explained before, and we have saved them in a vector L. Then we have generated the first $n$ waiting times, each of them as a realization of an exponential distribution with parameter L$[j]$, according to Theorem \ref{Theorem: time-change for exchangeable fractional Poisson process}. Finally, the trajectory is obtained cumulating the waiting times. The waiting times become longer, as the value of $\nu$ decreases. \\
Note that before performing both simulations, the seed was set to 1. The interested reader can find the code used to generate the figures at \\ {\tt https://github.com/Lorenzo-Facciaroni/Exchangeable-fractional-Poisson}.

\begin{figure}
    \centering
    \includegraphics[width = 17cm, height = 7cm]{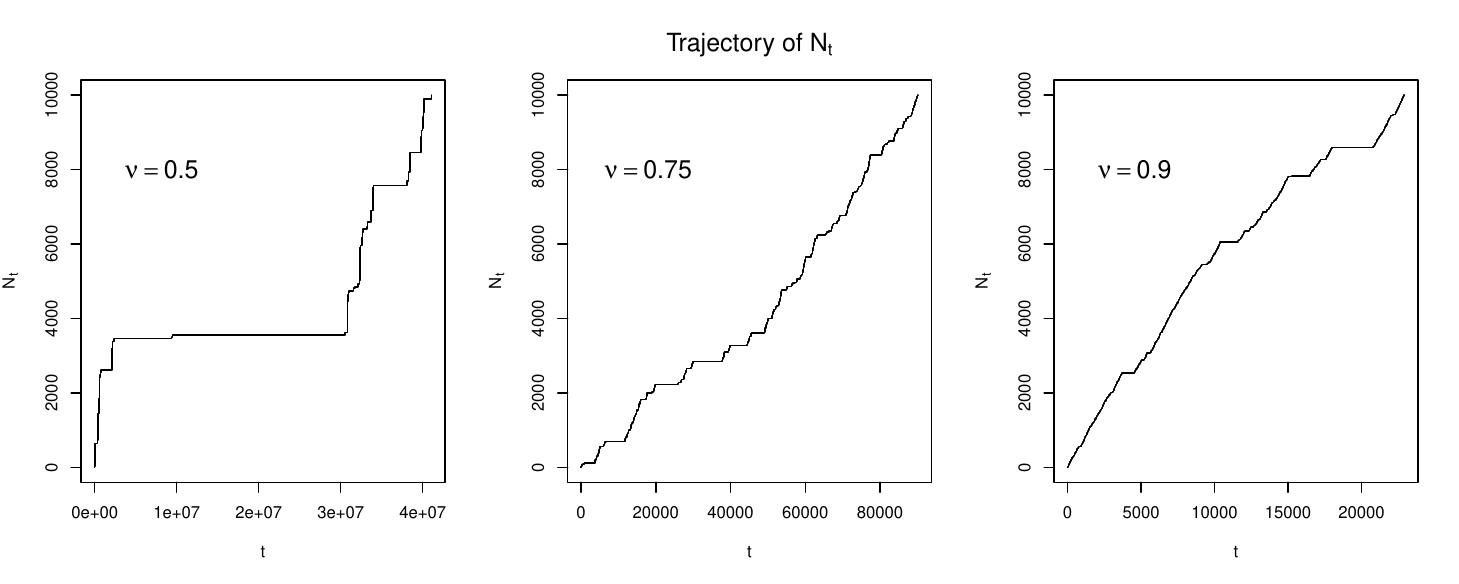}
    \caption{Trajectory of the exchangeable fractional Process $N_t$ for $\nu = 0.5, 0.75, 0.9$ and number of events $n\leq 10000$.}
    \label{Trajectory of exchangeable fractional for different values of nu}
\end{figure}

\newpage
\section{Para-Markov chains in continuous-time}\label{para-Markov section}

We here introduce  para-Markov chains, which include the exchangeable fractional Poisson process  as a notable case.

\begin{defin} \label{Definition of  para-Markov chains}
Let $Y = \{Y_n,\ n\in \mathbb{N}\}$ be a discrete time Markov chain on a finite or countable state space $\mathcal{S}$. For $\nu \in (0,1]$ and   $\lambda : \mathcal{S} \to (0,\infty)$, let $\{J_k\}_{k=1}^\infty$ be a sequence of non-negative random variables, such that,  $\forall n\in \mathbb{N}\setminus \{0\}$,
\begin{align} \label{conditional dependence structure for para-Markov}
P(J_1>t_1, \dots, J_n>t_n |Y_0=y_0, \dots, Y_{n-1}=y_{n-1})= \mathcal{M}_\nu \left ( -\left (\sum _{k=1}^{n} \lambda (y_{k-1})t_{k} \right )^\nu \right ) 
\end{align}
 where $t_k\geq0,\ k \in\{1,\ldots,n\}$. 
A continuous-time chain $X = \{X_t,\ t\in [0, \infty)\}$ such that 
\begin{align*}
    X_t = Y_n \qquad \qquad t\in[T_n, T_{n+1}),\ n\in\mathbb{N}
\end{align*}
where $T_n:= \sum_{k=1}^nJ_k$ and $T_0 := 0$, is said to be a continuous-time para-Markov chain.
\end{defin}

Note that if $\nu =1$ one re-obtains the joint survival function \eqref{Preliminaries: conditional dependence structure of continuous-time Markov chains} and then the process is a continuous-time Markov chain.  
For $\nu \in (0,1) $  the above process is neither Markov nor semi-Markov, because of the dependence between waiting times $J_k$. 

\begin{os}
    We observe that the Definition \ref{Definition of  para-Markov chains} completely defines the finite dimensional distributions of a para-Markov chain $X$.
 Indeed, denoting by $N_t$ the number of jumps up to time $t$, and letting $t_1 <t_2 <\dots <t_n$ we have
 \begin{align*}
        \mathbb{P}\left [\bigcap _{j=1}^n\{X_{t_j}=x_j\}\right ] &= \sum_{k_1 \leq k_2\leq \cdots \leq k_n}^{\infty}\mathbb{P}\left [\bigcap _{j=1}^n\{X_{t_j}=x_j\}, \, \bigcap _{j=1}^n\{N_{t_j}=k_j\}\right ]\\
        &= \sum_{k_1 \leq k_2\leq \cdots \leq k_n}^{\infty}\mathbb{P}\left [\bigcap _{j=1}^n\{Y_{N_{t_j}}=x_j\}, \, \bigcap _{j=1}^n\{N_{t_j}=k_j\}\right ]\\
        &= \sum_{k_1 \leq k_2\leq \cdots \leq k_n}^{\infty}\mathbb{P}\left [\bigcap _{j=1}^n\{Y_{k_j}=x_j\}, \, \bigcap _{j=1}^n\{T_{k_j} \leq t_j < T_{k_j+1} \}\right ]\\
        &= \sum_{k_1 \leq k_2\leq \cdots \leq k_n}^{\infty}\mathbb{P}\left [  \bigcap _{j=1}^n \left. \{T_{k_j} \leq t_j < T_{k_j+1} \} 
  \, \right| \, \bigcap _{j=1}^n\{Y_{k_j}=x_j\}  \right ]\mathbb{P}\left [\bigcap _{j=1}^n\{Y_{k_j}=x_j\}\right ]
\end{align*}
    where, in principle, the last term can be computed by means of the matrix $H$ of the embedded chain and the waiting time distribution given in Definition \ref{Definition of  para-Markov chains}. 
\end{os}

The reason for using the expression {\em para-Markov} is due to the following Theorem. According to it, $X$ is equal in distribution to a time-changed continuous-time Markov process. The time-change consists in rescaling the time $t$ by a Lamperti random variable.

\begin{te} \label{Proposition:  para-Makov chains as time-change of Markov chains through Lamperti}
Let $M = \{M_t,\ t\in [0,\infty)\}$ be a continuous-time Markov chain defined by \eqref{Preliminaries: conditional dependence structure of continuous-time Markov chains} and  $X = \{X_t,\ t\in [0,\infty)\}$ be a para-Markov chain defined in \ref{Definition of  para-Markov chains}.
Let $L$ be a Lamperti random variable, as defined in  \ref{Definition of Lamperti distribution}.
Then we have
    \begin{align*}
        X_t \overset{d}{=} M_{Lt} \qquad \qquad \forall t \geq0,
    \end{align*}
    where $\overset{d}{=}$ denotes equality of  finite dimensional distributions.
\end{te}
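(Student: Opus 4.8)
The plan is to prove equality of all finite-dimensional distributions by a direct conditioning argument, using the representation of the finite-dimensional law of a para-Markov chain recorded in the Remark above Theorem \ref{Proposition:  para-Makov chains as time-change of Markov chains through Lamperti}. First I would fix times $t_1 < t_2 < \dots < t_n$ and states $x_1, \dots, x_n \in \mathcal{S}$ and write, for the time-changed Markov chain,
\begin{align*}
\mathbb{P}\left[\bigcap_{j=1}^n \{M_{Lt_j} = x_j\}\right] = \int_0^\infty \mathbb{P}\left[\bigcap_{j=1}^n \{M_{lt_j} = x_j\}\right] \mathbb{P}(L \in dl),
\end{align*}
and then expand the inner probability by the number of jumps of $M$ exactly as in the Remark, so that it becomes a sum over $k_1 \le \dots \le k_n$ of
\begin{align*}
\mathbb{P}\left[\left.\bigcap_{j=1}^n \{\tau_{k_j} \le l t_j < \tau_{k_j+1}\} \,\right| \bigcap_{j=1}^n \{Y_{k_j} = x_j\}\right] \mathbb{P}\left[\bigcap_{j=1}^n \{Y_{k_j} = x_j\}\right],
\end{align*}
where $\tau_k = \sum_{i=1}^k \theta_i$ and the $\theta_i$ are the (conditionally exponential) waiting times of $M$. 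The embedded chain $Y$ and the factor $\mathbb{P}[\bigcap_j \{Y_{k_j}=x_j\}]$ are identical for $X$ and for $M$, so the whole problem reduces to matching the conditional law of the jump-time vector $(\tau_1, \tau_2, \dots)$, rescaled by $1/L$, against the conditional law of $(T_1, T_2, \dots)$ for $X$.

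Next I would reduce that to a statement purely about joint survival functions of the waiting-time sequences. Conditionally on $Y_0 = y_0, \dots, Y_{n-1} = y_{n-1}$, the vector $(\theta_1/L, \dots, \theta_n/L)$ has joint survival function
\begin{align*}
\mathbb{P}\left[\left.\frac{\theta_1}{L} > t_1, \dots, \frac{\theta_n}{L} > t_n \,\right| Y_0 = y_0, \dots, Y_{n-1} = y_{n-1}\right] &= \int_0^\infty \prod_{k=1}^n e^{-\lambda(y_{k-1}) l t_k} \, \mathbb{P}(L \in dl) \\
&= \mathbb{E}\left[e^{-\left(\sum_{k=1}^n \lambda(y_{k-1}) t_k\right) L}\right] \\
&= \mathcal{M}_\nu\left(-\left(\sum_{k=1}^n \lambda(y_{k-1}) t_k\right)^\nu\right),
\end{align*}
where I used the conditional independence and exponential form \eqref{Preliminaries: conditional dependence structure of continuous-time Markov chains} of the waiting times of $M$ in the first line, independence of $L$ from the $\theta_k$ and from $Y$ to pull the integral inside, and the defining Laplace transform \eqref{lamperti laplace} of the Lamperti law in the last. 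This is exactly the conditional joint survival function \eqref{conditional dependence structure for para-Markov} defining the para-Markov chain $X$. Since the embedded chain $Y$ is, by hypothesis, the same discrete-time Markov chain with transition matrix $H$ in both constructions, and since the finite-dimensional distributions of a continuous-time chain of this type are determined by the pair (law of $Y$, conditional joint law of the waiting times) — as made explicit in the Remark — the two processes $X$ and $M_{Lt}$ have the same finite-dimensional distributions.

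To make the argument rigorous I would organize it so that the comparison happens at the level of the joint law of $(Y_0, Y_1, \dots, Y_n; T_1, \dots, T_n)$ versus $(Y_0, Y_1, \dots, Y_n; \tau_1/L, \dots, \tau_n/L)$ for every $n$: these agree because the $Y$-marginal agrees and the conditional waiting-time law agrees by the display above, and the map from this joint law to the law of $(X_{t_1}, \dots, X_{t_n})$ (respectively $(M_{Lt_1}, \dots, M_{Lt_n})$) via $X_t = Y_{N_t}$ is the same measurable map in both cases. The main obstacle I anticipate is not any single computation but bookkeeping: ensuring the conditioning/rescaling interchange with the integral against $\mathbb{P}(L \in dl)$ is justified (Tonelli applies since everything is non-negative, and $L$ is independent of $(\theta_k)_k$ and of $Y$), and, in the countable-state case, checking that the sums over jump counts $k_1 \le \dots \le k_n$ converge and may be interchanged with the $L$-integral — again handled by non-negativity. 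One should also note at the outset that the construction of $M_{Lt}$ requires $L$ to be independent of the Markov chain $M$, which is the implicit assumption; with that, the Theorem \ref{Theorem: time-change for exchangeable fractional Poisson process} stated earlier is the special case $\lambda(\cdot) \equiv \lambda$, $h_{i,i+1} = 1$, recovered by specializing $y_{k-1} \mapsto \lambda$ throughout.
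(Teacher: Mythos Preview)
Your proposal is correct and follows essentially the same route as the paper: the key step in both is the computation showing that, conditionally on $Y_0=y_0,\dots,Y_{n-1}=y_{n-1}$, the rescaled waiting times $(\theta_1/L,\dots,\theta_n/L)$ have joint survival function $\mathcal{M}_\nu\bigl(-(\sum_k \lambda(y_{k-1})t_k)^\nu\bigr)$ via the Lamperti Laplace transform, which matches \eqref{conditional dependence structure for para-Markov}. The paper simply asserts that this suffices for equality of finite-dimensional distributions, whereas you spell out the reduction through the Remark and flag the Tonelli and independence justifications; that extra bookkeeping is a welcome addition but does not change the argument.
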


\begin{proof}

Let $\{\theta _k\} _{k=1}^\infty$ be a sequence of exponential random variables as in
\eqref{Preliminaries: conditional dependence structure of continuous-time Markov chains}.
Then we have
$$M_t=Y_n,  \qquad   \tau _n \leq t< \tau _{n+1},    $$
where $\tau _n := \sum _{k=1}^n \theta _k$ and $\tau _0:=0$. By the random scaling of time $t \to Lt$ we have
$$M_{Lt}=Y_n, \qquad   \frac{\tau _n}{L} \leq t< \frac{\tau _{n+1}}{L},     $$
which means that the $k$-th waiting time of $M_{Lt}$ is equal to $$ \frac{\theta _k}{L}=\frac{\tau _{k+1}-\tau _k}{L}.$$
Thus, to prove that $M_{Lt}$ coincides with $X_t$ in the sense of finite-dimensional distributions,  it is sufficient to show that the sequence of waiting times  $\{\theta _k/L\} _{k=1}^\infty$ of $M_{Lt}$ has  joint distribution given by \eqref{conditional dependence structure for para-Markov} . This can be done using a conditioning argument, together with the above definition of Lamperti distribution:
\begin{align*}
        \mathbb{P}\biggl [\left. \frac{\theta _1}{L} > t_1,\ \ldots,\ \frac{\theta _n}{L} > t_n \right|  Y_0=y_0, \ldots Y_{n-1}=y_{n-1} \biggr ] &=
        \int_0^{\infty}e^{-l \sum_{k=1}^n \lambda (y_{k-1})t_k}\mathbb{P} (L\in dl)
        \\ &= \mathbb{E}\left[e^{- \left(\sum_{k=1}^n \lambda (y_{k-1}) t_k\right) L }\right]
        \\ &= \mathcal{M}_{\nu}\left(- \left(\sum_{k=1}^n \lambda (y_{k-1})t_k\right)^{\nu}\right)
    \end{align*}

    This completes the proof.\\
\end{proof}

As explained before, the Definition of  para-Markov process \ref{Definition of  para-Markov chains} holds even with  $\mathcal{S}$ a countable set. However,
from now on, we shall consider the case of finite state space, say $|\mathcal{S}| = n \in\mathbb{N}$.  Without loss of generality, we shall write $\mathcal{S} =\{ 1,\ldots, n \}$. In this scenario, the generator $G$ of the Markov process $M$ in Theorem \ref{Proposition:  para-Makov chains as time-change of Markov chains through Lamperti} is an $n \times n$ matrix. Moreover, the systems of Kolmogorov backward and forward equations \eqref{kolmogorov equations markov} have the following solution
\begin{align*}
    P(t) = e^{Gt}.
\end{align*}
Furthermore, from Equation \eqref{generatore markoviano component-wise} the following decomposition holds in matrix from  
\begin{align*}
    G = \Lambda(H-I)
\end{align*}
being $\Lambda = \text{diag}\l \lambda(1),\ldots, \lambda(n)\r$ and $I$ the identity matrix. \\

\begin{os} The above considerations allow us to reinterpret Theorem \ref{Proposition:  para-Makov chains as time-change of Markov chains through Lamperti}
as follows.
The transition matrix of the Markov chain $M$  can be written as $P(t) = e^{\Lambda(H-I) t}$.  Then changing time $t \to Lt$ is equivalent to replacing $\Lambda$ with $L \Lambda$, i.e. rescaling the time parameter is equivalent to rescaling the expectation of each waiting time. 
\end{os}

The next Theorem is the main result of the paper and gives us the governing equation of a para-Markov chain as well as its solution, written in matrix form.

For a matrix $A\in\mathbb{C}^{n\times n}$ we shall indicate with $\rho(A)$ the spectral radius of $A$ and $\sigma(A)$ the spectrum. 
 We use the natural norm $||\cdot||: \mathbb{C}^{n\times n}\rightarrow \mathbb{R}^{+}$; this is a matrix norm induced by a vector norm, i.e. 
\begin{align*}
    ||A|| := \sup_{||x||_{v} = 1} ||Ax||_{v},
\end{align*}
where $||\cdot||_{v}:\mathbb{C}^n\rightarrow\mathbb{R}^{+}$ is a vector norm. Moreover,
     $A$ is said to be convergent if there exists a natural norm such that
    \begin{align*}
        \lim_{k\to\infty} ||A^k|| = 0.
    \end{align*}
 We shall use the notation $\mathbb{C}^{-} := \{z\in\mathbb{C}\ s.t.\ \Re\{z\} < 0\}$.
 
 For a scalar function $f:\mathbb{C}\rightarrow\mathbb{C}$, we refer to the meaning of $f(A)$, being $A\in\mathbb{C}^{n\times n}$, as discussed in Chapter 1 of \cite{higham2008functions}. Specifically, let $A$ have canonical Jordan decomposition $A = Z^{-1}JZ$, where $J$ is the block diagonal matrix, while $Z$ is the matrix whose columns contain the generalized eigenvectors. Hence $J = \mathrm{diag}({J_{m_1}(\alpha_1), \ldots, J_{m_p}(\alpha_p}))$, where $J_{m_k}(\alpha_k)$ denotes a Jordan block with dimension $m_k$ corresponding to the eigenvalue $\alpha_k$, i.e. it has $\alpha _k$ on the diagonal and $1$ above the diagonal; eigenvalues related to distinct blocks do not need to be distinct.
For $f(A)$ to be well defined, we need to require that $f(\cdot)$, as a scalar function, is defined on the spectrum of $A$, i.e. there must exist the derivatives
\begin{align*}
     f^{(j)}(\alpha_k), \ \ \ j = 0,\ldots, n_k - 1,\ k = 1,\ldots, s
\end{align*}
     with $s$ the number of distinct eigenvalues of $A$ and $n_k$ the order of the largest Jordan block where $\alpha_k$ appears. We say that $n_k$ is the \textit{index} of $\alpha _k$. In this case we can use the Jordan canonical decomposition $A = Z^{-1}JZ$ to define 
     \begin{align}
        f(A) := Z^{-1} f(J) Z = Z^{-1} \text{diag}\left(f(J_{m_1}(\alpha_1)), \ldots, f(J_{m_p}(\alpha_p))\right) Z  \label{matrice a blocchi}
    \end{align}
    being $p$  the number of Jordan blocks, i.e. the number of independent eigenvectors of $A$, and
    \begin{align*}
        f\left(J_{m_i} (\alpha _i)\right) := 
        \left[\begin{matrix}
            f(\alpha_i) & f'(\alpha_i) & \cdots & \frac{f^{(m_i-1)}(\alpha_i)}{(m_i - 1)!}
            \\ & f(\alpha_i) & \ddots & \vdots
            \\ & & \ddots & f'(\alpha_i)
            \\ & & & f(\alpha_i)
        \end{matrix}\right].
    \end{align*}

\begin{os}
Since the Mittag-Leffler function $\mathcal{M}_\nu$ defined in \eqref{funzione Mittag Leffler} is entire, then it is defined on the spectrum of any matrix $A \in \mathbb{C}^{n\times n}$.
 Let $A$ have Jordan decomposition $A= Z^{-1}JZ$. Then, being $M_\nu$ defined by a power series,  the matrix $\mathcal{M}_\nu (A)$ can be explicitly obtained as follows (see \cite{garrappa2017})
\begin{align}
    \mathcal{M}_\nu (A) &= \sum _{k=0}^\infty \frac{A^k}{\Gamma (1+\nu k)} =    \sum _{k=0}^\infty \frac{\overbrace{Z^{-1}JZ\ Z^{-1}JZ \cdots\ Z^{-1}JZ}^{\text{k times}}}{\Gamma (1+\nu k)} \nonumber
    \\ &=        Z^{-1} \biggl ( \sum _{k=0}^\infty \frac{J^k}{\Gamma (1+\nu k)}\biggr ) Z = Z^{-1}\mathcal{M}_\nu (J)Z
\end{align}
 which coincides with expression given in \eqref{matrice a blocchi}.
     For $\nu = 1$, we have $\mathcal{M}_{\nu}(x)= e^x$ and thus we get the exponential of a matrix
    as \begin{align*}
        e^A = Z^{-1} e^J Z.
    \end{align*}
   In this case, to compute $e^J$ explicitly, we observe that each Jordan block can be decomposed as 
    \begin{align*}
        J_{m_k} = \alpha_k I + N_k
    \end{align*}
    where $I$ is the identity matrix and $N_k$ is nilpotent of order $m_k$. The matrices $\alpha_k I$ and $N_k$ commute and thus the $k$-th block is given by
    \begin{align*}
        e^{J_{m_k}} = e^{\alpha_k}e^{N_k} = e^{\alpha_k} \sum_{s=0}^{m_k-1} \frac{(N_k)^s}{s!},
    \end{align*}
i.e. it is  sufficient to compute a finite sum.
\end{os}

Before stating the Theorem, we recall that a matrix $A$ is said to be \textit{irreducible} if it is not similar via a permutation to a block upper triangular matrix, i.e. it does not have invariant subspaces. Indeed, if the generator of a continuous time Markov chain is irreducible, then there is a non-zero probability of transitioning from any state to any other state.

\begin{te} \label{Theorem: G irriducibile allora abbiamo 3 tesi. etc etc}

 Let us consider a para-Markov chain $X$ and the related Markov chain $M$, in the sense of Theorem \ref{Proposition:  para-Makov chains as time-change of Markov chains through Lamperti}, with generator $G$. Let  $P(t)=[p_{ij}(t)]$ be the transition matrix of $X$, i.e. $p_{ij}(t)=\mathbb{P}(X_t=j|X_0=i),\ i,j\in\mathcal{S}$.
    If $G$ is irreducible, then 
    \begin{enumerate}
        \item The matrix $-(-G)^\nu$ exists for any $\nu\in(0,1]$,
        \item The transition matrix has the form $$P(t) = \mathcal{M}_{\nu}(-(-G)^{\nu}t^{\nu}),$$
        \item $P(t)$ is the solution of
        \begin{align}
            \frac{d^{\nu}}{dt^{\nu}} P(t) = -(-G)^{\nu} P(t) \label{equazione principale}
        \end{align}
        with initial condition $P(0) = I$.
    \end{enumerate}
\end{te}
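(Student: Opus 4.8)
The plan is to leverage Theorem \ref{Proposition:  para-Makov chains as time-change of Markov chains through Lamperti}, which identifies $X_t \overset{d}{=} M_{Lt}$ with $L$ a Lamperti variable, together with the spectral/Jordan machinery recalled above. First I would establish point (1). Since $G = \Lambda(H-I)$ is an irreducible generator of a continuous-time Markov chain on a finite state space, standard Perron–Frobenius / M-matrix theory gives that $0$ is a simple eigenvalue of $G$ (with constant right eigenvector) and every other eigenvalue lies in $\mathbb{C}^{-}$; hence $-G$ has a simple eigenvalue $0$ and all remaining eigenvalues in the open right half-plane. The map $z \mapsto z^\nu$, with the principal branch, is holomorphic on $\mathbb{C}\setminus(-\infty,0]$ and is well-defined at $0$ in the sense required for matrix functions because $0$ is a \emph{simple} eigenvalue of $-G$ (so its index is $1$ and only the value $z^\nu|_{z=0}=0$, not its derivatives there, is needed). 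Thus $(-G)^\nu$ is well defined via \eqref{matrice a blocchi}, and so is $-(-G)^\nu$.

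Next, for point (2), I would compute $P(t)$ by conditioning on $L$. By Theorem \ref{Proposition:  para-Makov chains as time-change of Markov chains through Lamperti} and the observation that the transition matrix of $M$ is $e^{Gt}$,
\begin{align*}
P(t) = \mathbb{E}\left[e^{G L t}\right] = \int_0^\infty e^{G l t}\,\mathbb{P}(L\in dl).
\end{align*}
Using the Jordan decomposition $G = Z^{-1}JZ$ and the fact that matrix exponentials and the expectation both act blockwise, this reduces to evaluating, for each eigenvalue $\alpha$ of $G$ (and its derivatives up to the block size), the scalar quantity $\mathbb{E}[e^{\alpha L t}]$ and its $t$-derivatives. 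By Definition \ref{Definition of Lamperti distribution}, $\mathbb{E}[e^{-\eta L}] = \mathcal{M}_\nu(-\eta^\nu)$ for $\eta\geq 0$; since $\alpha\in\mathbb{C}^{-}\cup\{0\}$, writing $\eta = -\alpha t$ (which has $\Re\eta\geq 0$) and invoking analytic continuation of the Laplace transform to the right half-plane gives $\mathbb{E}[e^{\alpha L t}] = \mathcal{M}_\nu\big(-(-\alpha)^\nu t^\nu\big) = \mathcal{M}_\nu\big(-(-\alpha t)^\nu\big)$. Reassembling the blocks, and matching against the definition of a matrix function applied to $\mathcal{M}_\nu$ composed with $z\mapsto -(-z)^\nu t^\nu$, yields $P(t) = \mathcal{M}_\nu\big(-(-G)^\nu t^\nu\big)$, where one checks the two matrix functions agree because $f(G)=g(G)$ whenever $f$ and $g$ agree with all needed derivatives on $\sigma(G)$.

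For point (3), I would use that $t\mapsto \mathcal{M}_\nu(ct^\nu)$ is an eigenfunction of the Caputo derivative with eigenvalue $c$ (as already used in the proof of \eqref{Govening equation of the exchangeable fractional Poisson process}). Applying this blockwise — on each Jordan block of $-(-G)^\nu$ the function $\mathcal{M}_\nu(-(-G)^\nu t^\nu)$ is built from $\mathcal{M}_\nu(\mu t^\nu)$ and its $\mu$-derivatives with $\mu$ the corresponding eigenvalue — gives $\frac{d^\nu}{dt^\nu}\mathcal{M}_\nu(-(-G)^\nu t^\nu) = -(-G)^\nu \mathcal{M}_\nu(-(-G)^\nu t^\nu)$, since differentiation in $t$ commutes with the constant matrix $-(-G)^\nu$ and with the similarity transform $Z$; one must also justify interchanging $d^\nu/dt^\nu$ with the (finite) sums defining the matrix function, which is immediate in finite dimensions. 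The initial condition $P(0)=I$ follows from $\mathcal{M}_\nu(0)=1$ applied blockwise, or directly from $P(0)=\mathbb{E}[e^{G\cdot 0}]=I$.

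The main obstacle I expect is the careful handling of point (1) and the branch/analytic-continuation issue in point (2): one must verify that the simplicity of the eigenvalue $0$ of $-G$ is exactly what makes $z\mapsto z^\nu$ admissible as a matrix function (no derivative of $z^\nu$ at $0$ is needed), and that the Laplace-transform identity \eqref{lamperti laplace}, a priori stated only for real $\eta\geq 0$, extends to the complex values $\eta=-\alpha t$ arising from the non-zero eigenvalues $\alpha\in\mathbb{C}^{-}$; this is where irreducibility of $G$ is genuinely used, and where a reader might worry about convergence of the power series $\mathcal{M}_\nu$ and the legitimacy of the blockwise computation. Everything else — commuting finite sums, the Caputo eigenfunction property, matching matrix functions on the spectrum — is routine.
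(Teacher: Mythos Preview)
Your argument for parts (1) and (2) matches the paper's closely: the paper also locates $\sigma(G)\subset\mathbb{C}^-\cup\{0\}$ via Gershgorin and then applies Perron--Frobenius to the shifted nonnegative matrix $T=\frac{1}{\eta\rho(G)}G+I$ to conclude that $0$ is simple, which is precisely the index-$1$ condition you identify as making $z\mapsto z^\nu$ admissible on $\sigma(-G)$; and for (2) the paper likewise conditions on $L$, writes $P(t)=\int_0^\infty e^{Glt}\,\mathbb{P}(L\in dl)$, identifies the scalar integrand $z\mapsto\int_0^\infty e^{zlt}\,\mathbb{P}(L\in dl)=\mathcal{M}_\nu(-t^\nu(-z)^\nu)$ as a function defined on $\sigma(G)$, and reads off the matrix identity.

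For part (3), however, the paper proceeds differently. Rather than invoking the scalar eigenfunction identity $\frac{d^\nu}{dt^\nu}\mathcal{M}_\nu(ct^\nu)=c\,\mathcal{M}_\nu(ct^\nu)$ blockwise, it works directly with the matrix power series $\mathcal{M}_\nu(-At^\nu)=\sum_k\frac{(-A)^k t^{\nu k}}{\Gamma(1+\nu k)}$, computes its Laplace transform as $s^{\nu-1}(s^\nu I+A)^{-1}$ via the matrix geometric series, and shows that any solution of $\frac{d^\nu}{dt^\nu}h(t)=-Ah(t)$, $h(0)=I$, has the same Laplace transform; it then separately establishes existence and continuity of $\frac{d^\nu}{dt^\nu}P(t)$ by proving the pointwise identity
\[
\int_0^t\bigl(P(\tau)-I\bigr)\frac{(t-\tau)^{-\nu}}{\Gamma(1-\nu)}\,d\tau=\int_0^t-(-G)^\nu P(\tau)\,d\tau
\]
(again via matching Laplace transforms) and differentiating. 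Your route is more direct and conceptually cleaner, but the step you flag as ``immediate in finite dimensions''---commuting $d^\nu/dt^\nu$ with the Jordan-block description---implicitly requires knowing that the Caputo derivative of each entry (in particular of the $\mu$-derivatives $\partial_\mu^j\mathcal{M}_\nu(\mu t^\nu)$ appearing off the diagonal) exists and obeys the expected Leibniz-type relation; the paper's Laplace-transform argument sidesteps this by building existence into the proof. Both approaches are correct: the paper's buys a self-contained existence argument at the cost of an extra Laplace computation, while yours is shorter but leaves that regularity check to the reader.
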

\begin{proof}
    Let us split the proof in three parts.
    \begin{enumerate}
        \item  Since the function $f(x) = (-x)^{\nu},\ \nu\in(0,1]$, is not differentiable at $x=0$, then, according to \eqref{matrice a blocchi}, it  is defined on the spectrum of $G$ if either $G$ does not have the eigenvalue $0$ or $G$ does have the eigenvalue $0$ with index $n=1$. However, we shall see that $G$ necessarily has the eigenvalue $0$.
        Thus, we shall show that a sufficient condition for $G$ to have the eigenvalue $0$ with index $1$ is its irreducibility; indeed,  irreducibility of $G$ implies that $0$ is a simple eigenvalue, i.e. its algebraic multiplicity is 1.
        
        We indicate with $\mathbf{1}$ the vector in $\mathbb{R}^{n}$ with all coordinates equal to 1. The row sums are 0 by  (\ref{generatore markoviano component-wise}), which gives 
    \begin{align*}
        G \mathbf{1} = \mathbf{0}.
    \end{align*}
    It means that $G$ has $0$ as eigenvalue with correspondent eigenvector $\mathbf{1}$.
    Moreover, we know that $g_{ij} \geq 0,\ i\neq j $ so, given the definition of the diagonal elements, we define
    \begin{align*}
        R_{ii} := \sum_{j\neq i} |g_{ij}| = \sum_{j\neq i} g_{ij} = -g_{ii} \ \ \ i\in\{1,\ldots, n\}
    \end{align*}
    which implies that the so-called Gershgorin discs $D(g_{ii}, R_{ii})= \{ z\in \mathbb{C}\ s.t. \ |z-g_{ii}|\leq R_{ii} \}$ are subsets of $\mathbb{C}^{-} \cup \{0\}$.
    The Gershgorin theorem in \cite{gershgorin1931uber} ensures that all the eigenvalues of $G$ lie in the union of such discs, which, in our case,  is contained   in    $\mathbb{C}^{-} \cup \{0\}$.

    Now, letting $\eta := \max\{g_{ij},\ i,j\in \mathcal{S}\}$ and considering that    $\rho(G) > 0$, the matrix $T$ defined by
    \begin{align*}
        T := \frac{1}{\eta\rho(G)} G + I
    \end{align*}
    is irreducible as well and it has non-negative entries. It follows by linearity of the eigenvalues that $\rho(T) = 1$ is an eigenvalue of $T$. Moreover, since the eigenvalues of $G$ lie in   $\mathbb{C}^{-} \cup \{0\}$, then all the
     eigenvalues of $T$ lie in $D\left( \frac{1}{2}, \frac{1}{2} \right)$, which is the closed disc centered in $\frac{1}{2}+0i$ and radius $\frac{1}{2}$. The Perron-Frobenius theorem (see Paragraph 8.3 in \cite{horn2012matrix}) guarantees that $1$ is actually a simple eigenvalue and it is called the Perron-Frobenius eigenvalue. By applying the inverse formula $T\mapsto \eta \rho(G)(T-I)$, we get that $\alpha = 0$ is simple for $G$. \\

     \item 
     For a Lamperti random variable $L$, the function 
     \begin{align}z \mapsto \int _{0}^\infty e^{ztl} \mathbb{P}(L\in dl) \label{vvvv}\end{align}
     is well defined for $\Re\{z\}\leq 0$. Moreover (\ref{vvvv}) is analytic for $z\neq 0$, which is clear also by expressing it as
\begin{align}z \mapsto \int_0^{\infty}e^{ztl}\mathbb{P}(L\in dl) = \mathcal{M}_\nu (-t^\nu (-z)^\nu). \end{align}
      Such a function is well defined on the spectrum of $G$, since $G$ has $0$ as a simple eigenvalue by the irreducibility assumption, and furthermore all the other eigenvalues have negative real part. By virtue of this consideration, together with the time-change Theorem \ref{Proposition:  para-Makov chains as time-change of Markov chains through Lamperti}, $P(t)$ takes the following  matrix form in the sense of \eqref{matrice a blocchi}
\begin{align*}
P(t) &= \int_0^{\infty}e^{Gtl}\mathbb{P}(L\in dl)= \mathcal{M}_{\nu}(-(-G)^{\nu}t^{\nu}),
\end{align*}
 as desired.

     \item Assume, for the moment, that $\frac{d^\nu}{dt^\nu}P(t)$    exists, it is continuous and Laplace transformable. To prove the statement, we preliminary recall that a square matrix $B$ is convergent iff $\rho(B) < 1$; in this case $I-B$ is non-singular, such that 
    \begin{align}
        (I - B)^{-1} = \sum_{k = 0}^{\infty} B^k. \label{serie geometrica matrici}
    \end{align}
Now, let us consider $g(t) = \mathcal{M}_{\nu}\left(-A t^{\nu}\right)$, being $A\in\mathbb{C}^{n\times n}$, and compute the Laplace transform
 \begin{align}
\Tilde{g}\left(s\right) &= \int_0^{\infty}e^{-st} \mathcal{M}_{\nu}\left(-A t^{\nu}\right) dt,  \qquad s\in \mathbb{C},
\end{align}
where the integral is meant component-wise. Being $\mathcal{M}_\nu$ entire, we have     
    \begin{align*}
        \Tilde{g}\left(s\right) &= \int_0^{\infty}e^{-st}\sum_{k = 0}^{\infty} \frac{(-1)^kA^k t^{k\nu}}{\Gamma(1 + \nu k)}dt  
        \\ &= \frac{1}{s}\sum_{k = 0}^{\infty}\frac{(-1)^kA^k }{s^{\nu k}}  
        \\ &= \frac{1}{s}\sum_{k = 0}^{\infty}\left(\frac{-A }{s^{\nu}}\right)^k  
    \end{align*}
and then $\tilde{g}(s)$ converges for all $s$ such that the spectral radius of $-A/s^\nu$ is less than $1$, namely the Laplace transform is certainly defined for $\Re\{s\}>(\rho (A)) ^{1/\nu}$. Moreover, the Laplace inversion Theorem  ensures that $\tilde{g}(s)$ is analytic in the same region  $\Re\{s\}>(\rho (A)) ^{1/\nu}$ and thus uniquely identifies $g(t)$. Using \eqref{serie geometrica matrici} we obtain 
    \begin{align}
        \Tilde{g}\left(s\right) &= \frac{1}{s}\left(I + \frac{A}{s^{\nu}}\right)^{-1} \nonumber
        \\ &= s^{\nu-1}\left(s^{\nu} I + A\right)^{-1} \ \ \ \ \ \ \  \Re\{s\} > \left(\rho(A)\right)^{\frac{1}{\nu}}. \label{Laplace transform of Mittag-Leffler function WITH MATRIX ARGUMENT}
    \end{align}

    We now look at the solution of the following problem
    \begin{align}
        \frac{d^{\nu}}{dt^{\nu}} h(t) = - A h(t) \qquad  h(0) = I,   \label{Equation to solve to be the eigenfunction with matricx argument of the fractional caputo derivative}
    \end{align}
   being $A\in\mathbb{C}^{n\times n}$. By applying the Laplace transform component-wise on both sides we get
    \begin{align*}
        s^{\nu} \Tilde{h}\left(s\right) - s^{\nu - 1} h(0) &= - A \Tilde{h}\left(s\right) \qquad s\in \mathbb{C}
       \end{align*}
     namely  
       \begin{align*}
        \left(I + \frac{A}{s^{\nu}}\right)\Tilde{h}\left(s\right) &= s^{-1} I \qquad s\in \mathbb{C}
    \end{align*}
    For $\Re\{s\} > \left(\rho(A)\right)^{\frac{1}{\nu}}$, we have that  $\left(I + \frac{A}{s^{\nu}}\right)$ is non-singular and then
    \begin{align*}
        \Tilde{h}\left(s\right) = s^{\nu-1}\left(s^{\nu} I + A\right)^{-1} \ \ \ \ \ \ \ \Re\{s\} > \left(\rho(A)\right)^{\frac{1}{\nu}} 
    \end{align*}
    which coincides with the Laplace transform \eqref{Laplace transform of Mittag-Leffler function WITH MATRIX ARGUMENT}. The inverse Laplace transform ensures equality for almost all $t>0$; moreover, continuity of $t\to P(t)$, which stems from the expression in point $(2)$ of the present Theorem, ensures equality for all $t>0$. Hence $h(t) = \mathcal{M}_{\nu}\left(-A t^{\nu}\right)$ solves the problem
   \eqref{Equation to solve to be the eigenfunction with matricx argument of the fractional caputo derivative}.    To conclude, we finally  set  $A = (-G)^{\nu}$.
   
   It remains to prove that $\frac{d^\nu}{dt^\nu}P(t)$    exists and it is continuous. The convolution $t\to \int _0 ^t \bigl (P(\tau)-P(0) \bigr ) \frac{(t-\tau) ^{-\nu}}{\Gamma (1-\nu)} d\tau$ is well defined (see Prop. 1.6.4 in \cite{arendt}).    By using similar calculations as above (of which we omit the details),
   it is easy to prove that the two functions $t\to \int _0 ^t \bigl (P(\tau)-P(0) \bigr ) \frac{(t-\tau) ^{-\nu}}{\Gamma (1-\nu)} d\tau$ and $t\to \int _0 ^t-(-G)^\nu P(\tau)d\tau$ have the same Laplace transform. Hence they coincide for almost all $t>0$. Moreover, both functions are  continuous since $P(t)$ is continuous by the expression given in point $(2)$ of the Theorem. Hence the two functions coincide for any $t>0$: 
\begin{align}   
   \int _0 ^t \bigl (P(\tau)-P(0) \bigr ) \frac{(t-\tau) ^{-\nu}}{\Gamma (1-\nu)} d\tau = \int _0^t -(-G)^\nu P(\tau) d \tau. \label{uuu}
   \end{align}
   The right side of (\ref{uuu}) is differentiable for $t>0$ because $-(-G)^{\nu}P(\tau)$ is component-wise continuous as it is a linear combination of continuous functions, and this is true also for the left side because the equality holds pointwise. Hence the Caputo derivative exists and is continuous. Now if we apply the time derivative to both sides, we obtain the desired equation.
    \end{enumerate}
    \ \\
\end{proof}
\begin{os}
By the above considerations, the matrix $(-G)^\nu$ is given by
     \begin{align*}
         (-G)^{\nu} = Z^{-1} \text{diag}\left((J_{m_1}(\alpha_1))^{\nu}, \ldots, (J_{m_p}(\alpha_p))^{\nu}\right) Z
     \end{align*}
     where 
     \begin{align*}
        \left(J_{m_i}(\alpha_i)\right)^{\nu} = 
        \left[\begin{matrix}
            \alpha_i^{\nu} & \nu\alpha_i^{\nu-1} & \cdots & \frac{(\nu)_{m_i-1}\alpha _i ^{\nu -m_i+1}}{(m_i-1)!}
            \\ & \alpha_i^{\nu} & \ddots & \vdots
            \\ & & \ddots & \nu\alpha_i^{\nu-1}
            \\ & & & \alpha_i^{\nu}
        \end{matrix}\right]
    \end{align*}
   and one can see that the eigenvalue $0$ must have index $1$, being $z\mapsto (-z)^\nu$  not differentiable at $0$ for $\nu \in (0,1)$. 
\end{os}

\begin{os}
Equation (\ref{equazione principale}) does not uniquely identify our para-Markov chain. For example, consider the process  $M(H(L(t)))$ where $M$ is a Markov chain with generator $G$, $H$ is a stable subordinator with index $\nu$ and $L$ is an inverse stable subordinator with index $\nu$, under the assumption that $M$, $H$ and $L$ are independent. This process is governed by the same equation (\ref{equazione principale}), even if it is not a para-Markov chain but a semi-Markov one. 
\end{os}

\section{Final remarks}
By using the same techniques as in the previous section, we find an interesting result on semi-Markov chains. Consider, indeed, semi-Markov chains with Mittag-Leffler waiting times recalled in Section \ref{preliminaries}, i.e. those governed by the equations \eqref{Kolmogorov backward and forward}.
\begin{prop} \label{distribuzione semi Markov}
    If the state space $\mathcal{S}$ is finite, then the solution of \eqref{Kolmogorov backward and forward} has the following matrix form
     \begin{align*}
         P(t) = \mathcal{M}_{\nu}(G t^{\nu}) .
     \end{align*}
\end{prop}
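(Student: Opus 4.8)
The plan is to reuse, in a simpler form, the Laplace-transform argument from the proof of Theorem \ref{Theorem: G irriducibile allora abbiamo 3 tesi. etc etc}. The simplification is that $\mathcal{M}_\nu$ is entire, so $\mathcal{M}_\nu(Gt^\nu)$ is unconditionally well defined for every finite matrix $G$: no spectral hypothesis (irreducibility, or control on the eigenvalue $0$) is needed here, because the non-smooth map $z\mapsto(-z)^\nu$ never enters the picture. First I would record the Laplace transform of the candidate solution: setting $A=-G$ in formula \eqref{Laplace transform of Mittag-Leffler function WITH MATRIX ARGUMENT}, already established there, gives
\[
\int_0^\infty e^{-st}\,\mathcal{M}_\nu(Gt^\nu)\,dt = s^{\nu-1}\left(s^\nu I - G\right)^{-1}, \qquad \Re\{s\} > \left(\rho(G)\right)^{1/\nu},
\]
the integral being meant component-wise; in that half-plane the resolvent exists since $\rho(-G/s^\nu)<1$, the right-hand side is analytic, and hence it identifies $\mathcal{M}_\nu(Gt^\nu)$ uniquely.

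Next I would transform the backward system. Applying the Laplace transform component-wise to $\frac{d^\nu}{dt^\nu}P(t)=GP(t)$, $P(0)=I$, and using the transform of the Caputo derivative, yields $s^\nu\tilde P(s)-s^{\nu-1}I=G\tilde P(s)$, i.e. $(s^\nu I-G)\tilde P(s)=s^{\nu-1}I$, so $\tilde P(s)=s^{\nu-1}(s^\nu I-G)^{-1}$ for $\Re\{s\}$ large. This coincides with the transform computed above, so by Laplace inversion $P(t)=\mathcal{M}_\nu(Gt^\nu)$ for a.e. $t>0$, and then for all $t>0$ by continuity of $t\mapsto\mathcal{M}_\nu(Gt^\nu)$. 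For the forward system $\frac{d^\nu}{dt^\nu}P(t)=P(t)G$ the same computation gives $\tilde P(s)(s^\nu I-G)=s^{\nu-1}I$, hence again $\tilde P(s)=s^{\nu-1}(s^\nu I-G)^{-1}$ (note that $G$ commutes with its resolvent and with $\mathcal{M}_\nu(Gt^\nu)$), so both systems share the solution $\mathcal{M}_\nu(Gt^\nu)$.

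The only point needing a little care — and the one I expect to be the main, though still routine, obstacle — is the a priori justification that $\frac{d^\nu}{dt^\nu}P(t)$ exists, is continuous, and is Laplace transformable, so that the above manipulations are legitimate. This is handled exactly as in part $(3)$ of the proof of Theorem \ref{Theorem: G irriducibile allora abbiamo 3 tesi. etc etc}: the Riemann--Liouville convolution $t\mapsto\int_0^t\bigl(P(\tau)-P(0)\bigr)\frac{(t-\tau)^{-\nu}}{\Gamma(1-\nu)}d\tau$ is well defined and, by comparing Laplace transforms, equals $t\mapsto\int_0^t GP(\tau)\,d\tau$; both are continuous since $P(t)=\mathcal{M}_\nu(Gt^\nu)$ is continuous, so they agree for every $t>0$, and differentiating gives the equation. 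Because $\mathcal{M}_\nu$ is entire there is no subtlety about $\sigma(G)$, so this step is if anything shorter than in the para-Markov case.
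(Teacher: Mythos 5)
Your proposal is correct and follows essentially the same route as the paper, whose proof simply adapts the Laplace-transform argument of point $(3)$ of Theorem \ref{Theorem: G irriducibile allora abbiamo 3 tesi. etc etc} with $A=-G$. Your added observations — that no irreducibility or spectral hypothesis is needed because $\mathcal{M}_\nu$ is entire, and that the forward system follows since $G$ commutes with its resolvent — are accurate and just make explicit what the paper leaves implicit.
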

\begin{proof}
    It is sufficient to adapt the arguments used in the proof of point $3)$ of theorem \ref{Theorem: G irriducibile allora abbiamo 3 tesi. etc etc}, setting $A = -G$.
\end{proof}
To the best of our knowledge, the result in Proposition \ref{distribuzione semi Markov} is new. Indeed, in the literature,  $P(t)$ has been written by using the composition of the corresponding Markov process with an inverse stable subordinator (see e.g. \cite{toaldo2019}), but the explicit solution in matrix form has never been written.

Table \ref{Table: comparison between continuous-time Markov, semi-Markov and  para-Markov processes} sums up the main facts we have discussed on Markov, semi-Markov and para-Markov chains. 

\begin{table}[!ht]
    \centering
    \begin{tabular}{|c|c|c|} \hline 
         &  Governing Equation& Solution\\ \hline 
         Markov & $\frac{d}{dt}P(t) = G P(t)$ &  $P(t) = e^{Gt}$ \\ \hline 
         semi-Markov & $\frac{d^{\nu}}{dt^{\nu}}P(t) = GP(t)$ & $P(t) = \mathcal{M}_{\nu}\left(Gt^{\nu}\right)$ \\ \hline
  para-Markov& $\frac{d^{\nu}}{dt^{\nu}} P(t) = -(-G)^{\nu} P(t)$ & $P(t) = \mathcal{M}_{\nu}(-(-G)^{\nu}t^{\nu})$ \\\hline
    \end{tabular}
    \caption{Comparison between continuous-time Markov, semi-Markov and  para-Markov processes.}
    \label{Table: comparison between continuous-time Markov, semi-Markov and  para-Markov processes}
\end{table}
Note that for $\nu =1$ semi-Markov and para-Markov chains reduce to Markov ones.
Once again, we stress the fact that the governing equation of Markov chains is driven by the first derivative, which is a local operator, whereas the governing equations of the semi-Markov and  para-Markov chains depend on the Caputo derivative of order $\nu$, which is a non-local operator. It is related to the characteristic of the processes themselves: the probability of a future state depends both on the present value of the process and also on the past. \\

\vspace{0.5cm}

\section*{Acknowledgements}
The authors acknowledge financial support under the National
Recovery and Resilience Plan (NRRP), Mission 4, Component 2, Investment 1.1, Call for tender No.
104 published on 2.2.2022 by the Italian Ministry of University and Research (MUR), funded by the
European Union - NextGenerationEU- Project Title “Non-Markovian Dynamics and Non-local Equations”
- 202277N5H9 - CUP: D53D23005670006 - Grant Assignment Decree No. 973 adopted on June 30, 2023,
by the Italian Ministry of Ministry of University and Research (MUR). \\

The author Bruno Toaldo would like to thank the Isaac Newton Institute for Mathematical Sciences, Cambridge, for support and hospitality during the programme Stochastic Systems for Anomalous Diffusion, where work on this paper was undertaken. This work was supported by EPSRC grant EP/Z000580/1.

\section*{Conflict of interests}
The authors declare there is no conflict of interests.

\section*{Data statement}
Data and programs concerning this paper can be found at \\ {\tt https://github.com/Lorenzo-Facciaroni/Exchangeable-fractional-Poisson}.

\bibliographystyle{plain}
\bibliography{Bibliografia}

\begin{thebibliography}{10}

\bibitem{arendt}
W.~Arendt, C.~Batty, M.~Hieber, and F.~Neubrander.
\newblock {\em Vector valued Laplace transform and Cauchy problems}.
\newblock Birkhauser, 2010.

\bibitem{ascione2021}
G.~Ascione, N.~Leonenko, and E.~Pirozzi.
\newblock Fractional immigration death processes.
\newblock {\em Journal of Mathematical Analysis and Applications}, (495), 2021.

\bibitem{barlow1992}
R.~E. Barlow and M.~B. Mendel.
\newblock De finetti-type representations for life distributions.
\newblock {\em Journal of the American Statistical Association}, 87(420):1116--1122, 1992.

\bibitem{orsingherbeghin2009}
L.~Beghin and E.~Orsingher.
\newblock Fractional poisson processes and related planar random motions.
\newblock {\em Electronic Journal of Probability}, 14(61):1790--1826, 2009.

\bibitem{beghin2019}
L.~Beghin and C.~Ricciuti.
\newblock Time-inhomogeneous fractional poisson processes defined by the multistable subordinator.
\newblock {\em Stochastic Analysis and Applications}, 37(2):171--188, 2019.

\bibitem{caramellino1994dependence}
L.~Caramellino and F.~Spizzichino.
\newblock Dependence and aging properties of lifetimes with schur-constant survival functions.
\newblock {\em Probability in the Engineering and Informational Sciences}, 8(1):103--111, 1994.

\bibitem{caramellino1996wbf}
L.~Caramellino and F.~Spizzichino.
\newblock Wbf property and stochastical monotonicity of the markov process associated to schur-constant survivial functions.
\newblock {\em Journal of Multivariate Analysis}, 56(1):153--163, 1996.

\bibitem{comtet2012advanced}
L.~Comtet.
\newblock {\em Advanced Combinatorics: The art of finite and infinite expansions}.
\newblock Springer Science \& Business Media, 2012.

\bibitem{degregorio2021}
A.~De~Gregorio and F.~Iafrate.
\newblock Telegraph random evolutions on a circle.
\newblock {\em Stochastic Processes and their Applications}, (141):79--108, 2021.

\bibitem{dicrescenzo2016}
A.~Di~Crescenzo, B.~Martinucci, and A.~Meoli.
\newblock A fractional counting process and its connection with the poisson process.
\newblock {\em ALEA}, (13):291--307, 2016.

\bibitem{faa1855}
F.~Faà~di Bruno.
\newblock Sullo sviluppo delle funzioni.
\newblock {\em Annali di Scienze Matematiche e Fisiche}, 6:479--868, 1855.

\bibitem{garra2015}
R.~Garra, E.~Orsingher, and F.~Polito.
\newblock State-dependent fractional point processes.
\newblock {\em Journal of applied Probability}, 52(1):18--36, 2015.

\bibitem{garrappa2017}
R.~Garrappa and M.~Popolizio.
\newblock Computing the matrix {M}ittag-{L}effler function with applications to fractional calculus.
\newblock {\em Journal of Scientific Computation}, 77:129--153, 2017.

\bibitem{georgiu2015}
N.~Georgiou, I.~Z. Kiss, and E.~Scalas.
\newblock Solvable non-markovian dynamic network.
\newblock {\em Physical Review E}, 92(4):171--188, 2015.

\bibitem{gershgorin1931uber}
S.~A. Gershgorin.
\newblock Uber die abgrenzung der eigenwerte einer matrix.
\newblock {\em News of the Russian Academy of Sciences. Mathematical series}, (6):749--754, 1931.

\bibitem{gupta2023}
N.~Gupta and A.~Kumar.
\newblock Fractional poisson processes of order k and beyond.
\newblock {\em Journal of theoretical probability}, 36(4):2165--2191, 2023.

\bibitem{higham2008functions}
N.~J. Higham.
\newblock Functions of matrices: Theory and computation, 2008.

\bibitem{horn2012matrix}
R.~Horn and C.~R. Johnson.
\newblock {\em Matrix analysis}.
\newblock Cambridge university press, 2012.

\bibitem{Lamperti_James}
L.~F. James.
\newblock {Lamperti-type laws}.
\newblock {\em The Annals of Applied Probability}, 20(4):1303 -- 1340, 2010.

\bibitem{kataria2019}
K.~K. Kataria and P~Vellaisamy.
\newblock On distributions of certain state-dependent fractional point processes.
\newblock {\em Journal of theoretical probability}, 32:1554--1580, 2019.

\bibitem{kolokoltsov2009}
V.~Kolokoltsov.
\newblock Generalized continuous-time random walks, subordination by hitting times, and fractional dynamics.
\newblock {\em Theory of Probability \& Its Applications}, 53(4):594--609, 2009.

\bibitem{lamperti1958}
J.~Lamperti.
\newblock An occupation time theorem for a class of stochastic processes.
\newblock {\em Transactions of the American Mathematical Society}, 88:380--387, 1958.

\bibitem{laskin2003}
N.~Laskin.
\newblock Fractional poisson process.
\newblock {\em Communications in nonlinear science and numerical simulation}, 8:201--213, 2003.

\bibitem{leonenko2017}
N.~Leonenko, E.~Scalas, and M.~Trinh.
\newblock The fractional non homogeneous poisson process.
\newblock {\em Statistics and Probability Letters}, 120(4):147--156, 2017.

\bibitem{maheshwari2019}
A.~Maheshwari and P.~Vellaisamy.
\newblock Fractional poisson process time-changed by lévy subordinator and its inverse.
\newblock {\em Journal of Theoretical Probability}, 32(4):1278--1305, 2019.

\bibitem{mainardi2004}
F.~Mainardi, R.~Gorenflo, and E.~Scalas.
\newblock A fractional generalization of the poisson processes.
\newblock {\em Vietnam Journal of Mathematics}, 32(SI):53--64, 2004.

\bibitem{scalas2004}
F.~Mainardi, R.~Gorenflo, and E.~Scalas.
\newblock Uncoupled continuous-time random walks: Solution and limiting behavior of the master equation.
\newblock {\em Physical Review E}, 69.1(1):153--163, 2004.

\bibitem{meerschaert2011}
M.~Meerschaert, E.~Nane, and P.~V.
\newblock The fractional poisson process and the inverse stable subordinator.
\newblock {\em Electronic Journal of Probability}, 129(9):2850--2879, 2019.

\bibitem{toaldo2019}
M.~Meerschaert and B.~Toaldo.
\newblock Relaxation patterns and semi-markov dynamics.
\newblock {\em Stochastic Processes and their Applications}, 129(9):2850--2879, 2019.

\bibitem{metzler2000}
R.~Metzler and J.~Klafter.
\newblock The random walk's guide to anomalous diffusion: a fractional dynamics approach.
\newblock {\em Physics reports}, 339(1):1--77, 2000.

\bibitem{norris1998}
J.~R. Norris.
\newblock {\em Markov chains}.
\newblock Cambridge university press., 1998.

\bibitem{polito2010}
E.~Orsingher and F.~Polito.
\newblock Fractional pure birth processes.
\newblock {\em Bernoulli}, 16(3):858--881, 2010.

\bibitem{orsingher2013}
E.~Orsingher and F.~Polito.
\newblock Randomly stopped nonlinear fractional birth processes.
\newblock {\em Stochastic Analysis and Applications}, 31(2):262--292, 2013.

\bibitem{orsingher2018semi}
E.~Orsingher, C.~Ricciuti, and B.~Toaldo.
\newblock On semi-markov processes and their kolmogorov's integro-differential equations.
\newblock {\em Journal of Functional Analysis}, 275(4):830--868, 2018.

\bibitem{prabhakar1971}
T.~R. Prabhakar.
\newblock A singular integral equation with a generalized mittag-leffler function in the kernel.
\newblock {\em Yokohama Mathematical Journal}, 19(1):7--15, 1971.

\bibitem{ricciuti2017semi}
C.~Ricciuti and B.~Toaldo.
\newblock Semi-markov models and motion in heterogeneous media.
\newblock {\em Journal of Statistical Physics}, 169(2):340--361, 2017.

\bibitem{ricciuti2023semi}
C.~Ricciuti and B.~Toaldo.
\newblock From semi-markov random evolutions to scattering transport and superdiffusion.
\newblock {\em Communications in Mathematical Physics}, 401(3):2999--3042, 2023.

\bibitem{scalas2021}
E.~Scalas and B.~Toaldo.
\newblock Limit theorems for prices of options written on semi-markov processes.
\newblock {\em Theory of Probability and Mathematical Statistics}, 105(3):3--33, 2021.

\end{thebibliography}

\end{document}